\newtheorem{mythm}{Theorem}
\newtheorem{mycor}{Corollary}
\newtheorem{myfact}{Fact}
\newtheorem{thm}{Theorem}[section]
\newtheorem{cor}[thm]{Corollary}
\newtheorem{lem}[thm]{Lemma}
\newtheorem{prop}[thm]{Proposition}
\newtheorem{cla}[thm]{Claim}
\newtheorem{defn}[thm]{Definition}
\newtheorem{rem}[thm]{Remark}
\newcommand{\deleted}[1]{}
\newcommand{\comment}[1]{}
\newcommand\Diff{{\operatorname{Diff}}}
\newcommand\eps{\epsilon}
\newcommand\Lip{{\operatorname{Lip}}}
\newcommand\NN{{\mathbb N}}
\newcommand\Prob{{\operatorname{Prob}}}
\newcommand\Pq{{\mathcal P'}}
\newcommand\RR{{\mathbb R}}
\renewcommand\top{{\operatorname{top}}}
\newcommand\TT{{\mathbb T}}
\newcommand\ZZ{{\mathbb Z}}
\newcommand\mat[1]{{\left(\begin{matrix} #1 \end{matrix}\right)}}
\newcommand\alter[1]{{\left\{\begin{array}{ll} #1 \end{array}\right.}}
\begin{document}

\title[Diffeomorphisms without maximal measures]{$C^r$ surface diffeomorphisms with\\ no maximal entropy measure}
\author{J\'er\^ome Buzzi}
\address{C.N.R.S. \& D\'epartement de Math\'ematiques, Universit\'e Paris-Sud, 91405 Orsay, France}
\email{jerome.buzzi@math.u-psud.fr}
\thanks{}

\subjclass[2000]{37C40, 37A35, 37C15}
\date{June 2011, May 2012}
\keywords{topological entropy; smooth ergodic theory; thermodynamical formalism; maximal entropy measure; Lyapunov exponents}
\commby{}

\begin{abstract}
For any $1\leq r<\infty$, we build on the disk and therefore on any manifold, a $C^r$-diffeomorphism with no measure of maximal entropy.\\
$  $\\
{\sc R\'esum\'e.} Pour tout $1\leq r<\infty$, nous construisons, sur le disque et donc sur toute vari\'et\'e, un diff\'eomorphisme de classe $C^r$ sans mesure d'entropie maximale.
 \end{abstract}

\maketitle

\section{Introduction}

\subsection{Measures of maximal entropy}
Consider a diffeomorphism $f$ of a compact manifold $M$ with nonzero topological entropy $h_\top(f)>0$,  i.e., having an exponentially large number of orbits of a given length distinguishable with a fixed precision (see Section~\ref{subsec:def} for definitions). A natural question is whether there exist invariant measures $\mu$ "describing almost all orbits"  in the sense that their entropy $h(f,\mu)$ is maximal and equal to $h_\top(f)$. Such measures, if they exist, include ergodic ones which we will call \emph{maximal (entropy) measures}.  

The variational principle states that continuity is enough to ensure the existence of measures with entropy arbitrarily close to $h_\top(f)$.   Sufficient conditions for achieving this value include uniform hyperbolicity and, by a classical result of Newhouse  \cite{newhouse1989}, $C^\infty$ smoothness (or a combination \cite{buzziruette2006}). 
Newhouse theorem is sharp: for every $r<\infty$, there exist $C^r$ smooth dynamical systems on compact manifolds with no maximal measures.

The first counter-examples are due to Misiurewicz \cite{misiurewicz1973} who built $C^r$ diffeomorphisms of the $4$-torus with no measure of maximal entropy for all $r<\infty$. This author has given simple examples for maps on the interval \cite{Buzzi1997} (see also \cite{ruette2002}) and for continuous, piecewise affine maps of the square \cite{buzziNoMaxAff2009}. However the case of $C^r$ diffeomorphisms in dimensions $2$ and $3$ has remained open. The present paper relies on a construction of Newhouse to provide such counter-examples.

\subsection{Definitions}\label{subsec:def}
For a real number $r\geq0$, one says that a map $f:M\to N$ is $C^r$ smooth if it is $[r]$ times\footnote{$[r]:=\max\{s\in\ZZ:s\leq r\}$.} differentiable with $D^{[r]}f$ H\"older continuous of exponent $r-[r]$ (or just continuous if $r=[r]$, i.e., if $r\in\NN$). Let $D:=\{(x,y)\in\RR^2:x^2+y^2\leq 4\}$. If $M$ is a manifold with boundary, $\Diff^r_0(M)$ denotes the set of $C^r$ diffeomorphisms of $M$ which coincide with the identity on a neighborhood of the boundary of $M$. $\Diff^r_0(M)$ is endowed with the $C^r$ topology defined by the $C^r$ norm on any finite $C^r$ atlas of $M$.

We recall\footnote{We refer to \cite{walters2000introduction} or \cite{katok1997introduction} for background on the entropy theory of continous maps of compact metric spaces.} that the \emph{topological entropy} of a continuous map $f:M\to M$ of a compact metric space is: $h_\top(f):=\lim_{\eps\to0} h_\top(f,\eps)$ with $h_\top(f,\eps):=\limsup_{n\to\infty} \frac1n\log r_f(\eps,n)$ and
 $$
     r_f(\eps,n):=\max\{\#S:S\subset M\text{ and }\forall x,y\in S\; x=y \text{ or } d_{f,n}(x,y)\geq\eps\}
 $$
using the Bowen-Dinaburg's distances $d_{f,n}(x,y):=\max\{ d(f^kx,f^ky):0\leq k<n\}$ for $n\geq1$. One can say loosely that the topological entropy counts the number of orbits.

There is a \emph{measure-theoretic entropy}:  to each measure\footnote{From now on, measure means invariant probability Borel measure.} $\mu$ is associated a number $h(f,\mu)\in[0,\infty]$, which "counts the number of orbits seen" by the measure $\mu$.
The \emph{variational principle} relates the two notions of entropy:
 \begin{equation}\label{eq:varprin}
    h_\top(f) = \sup_{\mu\in\Prob(f)} h(f,\mu).
 \end{equation}
Maximal entropy measures are exactly those achieving this supremum (if they exist) and can be interpreted as those seeing all the orbits (on a logarithmic scale).

Finally, any measure $\mu$ has an ergodic decomposition: $\mu=\int \mu_c \, \hat\mu(dc)$ with $\mu_c$ an ergodic measure for $\hat\mu$-a.e. $c$, and the entropy function is \emph{harmonic} over the set $\Prob(f)$  of invariant probability measures with the vague topology: 	 $h(f,\mu)=\int h(f,\mu_c)\, \hat\mu(dc)$. Thus, (i) to exclude the existence of invariant probability measures with maximal entropy, it is enough to exclude that of ergodic  ones; (ii) the supremum in the variational principle \eqref{eq:varprin} can be restricted to ergodic measures.

We will need the following two bounds on the entropy of a measure. First, say that a measure is \emph{quasi-periodic} if it is supported by a periodic orbit or inside a topological circle which is conjugate to a rotation. The entropy of a quasi-periodic measure is zero. Second, if the map is $C^1$, the \emph{Ruelle inequality} states (in the special case of a surface diffeomorphism) that for an ergodic invariant probability measure $\mu$:
 \begin{equation}\label{eq:Ruelle}
     h(f,\mu) \leq \lambda^u(f,\mu)^+ \text{ with } \lambda^u(f,\mu)\text{ the $\mu$-a.e. value of }\max_{v\in T^1_xM} \lambda(x,v)
 \end{equation}
where $s^+:=\max(s,0)$, $\lambda(x,v):=\liminf_{n\to\infty} \frac1n\log\|(f^n)'(x).v\|$ and $T^1_xM:=\{v\in T_xM:\|v\|_x=1\}$. $\lambda(x,v)$ is the Lyapunov exponent of $v\in T_xM$ whereas $\lambda^u(f,\mu)$ is the maximal Lyapunov exponent of $\mu$. Oseledets theorem (see, e.g., \cite{katok1997introduction}) ensures that the existence of such a $\mu$-a.e. constant value.

Let $\Lip(f):=\sup_{x\ne y} d(fx,fy)/d(x,y)$ be the Lipschitz constant of $f$ and set $\lambda(f):=\lim_{n\to\infty}\frac1n\log\Lip(f^n)$. We have $h_\top(f)\leq\lambda(f)$.

\subsection{Main results}
Let $\Pq(f)$ be the set of ergodic, invariant probability measures for $f$ which are not quasi-periodic. Let $\lambda^u(\Pq(f)) := \sup_{\nu\in\Pq(f)} \lambda^u(f,\nu)$. 

\begin{mythm}\label{mythm1}
There exists $f_0\in\Diff^\infty_0(D)$ with $h_\top(f_0)=0$ and the following properties.

Let  $1\leq r<\infty$ be any real number and let  $\mathcal U_0$ be any neighborhood of $f_0$ in $\Diff^r_0(M)$.
There exists $f\in\mathcal U_0$ such that:
 \begin{enumerate}
  \item $h_\top(f)=\frac1r\lambda(f_0)>0$;
  \item  
 $
     \forall \mu\in\Pq(f) \quad \lambda^u(f,\mu) < \lambda^u(\Pq(f)) = \frac1r\lambda(f_0).
 $
 \end{enumerate}
\end{mythm}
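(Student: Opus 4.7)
The plan is to let $f_0\in\Diff^\infty_0(D)$ be a Newhouse-style diffeomorphism with a hyperbolic saddle fixed point $p$, whose top expanding eigenvalue equals $e^{\lambda(f_0)}>0$ and which carries all the Lipschitz growth of $f_0$. The separatrices of $p$ should meet only through a very degenerate, non-transverse tangency, so that no horseshoe is present and $h_\top(f_0)=0$, while $\lambda(f_0)>0$ comes from the linearisation at $p$. The identity-near-$\partial D$ condition confines everything to a controlled region of the disk.

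For fixed $r\in[1,\infty)$ and any neighbourhood $\mathcal U_0$ of $f_0$ in $\Diff^r_0(D)$, I would build $f$ by superposing on $f_0$ a sequence of $C^r$-small bumps supported in pairwise disjoint windows $W_n$ threaded along the tangency loop. Each bump unfolds the tangency locally, and the standard renormalisation of the first return to $W_n$ produces a horseshoe $\Lambda_n$. The $\frac1r$ factor should arise from a $C^r$ interpolation: a bump of amplitude $\eps_n$ supported on a window of radius $\delta_n$ has $C^r$-norm of order $\eps_n/\delta_n^r$, so membership in $\mathcal U_0$ forces $\delta_n\gtrsim\eps_n^{1/r}$. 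Inside $W_n$, the renormalised $n$-th return has expansion of order $n\lambda(f_0)+\log\delta_n^{-1}$; balanced against $\delta_n\sim\eps_n^{1/r}$ this caps the maximal Lyapunov exponent of $\Lambda_n$ at $\frac{1}{r}\lambda(f_0)-\eta_n$ with $\eta_n\downarrow0$. A careful choice of bump profile should tune $h_\top(f|\Lambda_n)\to\frac{1}{r}\lambda(f_0)$, and since the excerpt gives $h_\top(f)\leq\lambda(f)$ while the perturbation is kept $C^r$-small enough that $\lambda(f)=\frac{1}{r}\lambda(f_0)$, the variational principle yields equality in (1).

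For (2), the goal is to show that every ergodic $\mu\in\Pq(f)$ lives on a single $\Lambda_n$. Outside $\bigcup_n\Lambda_n$ the dynamics is a $C^r$-small perturbation of $f_0$ localised away from the tangency loop, so the $h_\top(f_0)=0$ hypothesis forces any ergodic measure there to be quasi-periodic. On $\Lambda_n$, Ruelle's inequality together with the horseshoe analysis gives $\lambda^u(f,\mu)\leq\frac{1}{r}\lambda(f_0)-\eta_n$. Taking the supremum over $n$ yields $\lambda^u(\Pq(f))=\frac{1}{r}\lambda(f_0)$, attained only in the limit. Combined with (1) and the harmonicity of entropy, this prevents the existence of a maximal measure for $f$.

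The hard part will be the diagonal balancing of the sequences $(\eps_n,\delta_n)$: the cumulative $C^r$-perturbation must stay inside $\mathcal U_0$; each horseshoe's maximal exponent must fall strictly below $\frac{1}{r}\lambda(f_0)$ while the supremum over $n$ reaches it; and the $\Lambda_n$ must be mutually disjoint invariant sets so that ergodicity confines each measure to a single one. A second delicate point is ruling out exotic ergodic measures supported on $\overline{\bigcup_n\Lambda_n}\setminus\bigcup_n\Lambda_n$ with positive top Lyapunov exponent; this requires a careful description of the wandering set outside the horseshoes, and a uniform-in-$n$ version of the Pesin--Katok horseshoe approximation combined with the $C^r$ Yomdin-type bound on local expansion. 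The technical ingredients are available in Newhouse's framework, but the quantitative matching of scales is what makes the construction non-trivial.
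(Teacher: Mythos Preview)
Your overall picture---homoclinic loop, sequence of $C^r$-small bumps creating horseshoes with entropies $\nearrow\frac1r\lambda(f_0)$---matches the paper. But two steps in your outline are genuinely wrong, and the paper's proof is organised precisely to avoid them.

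First, the claim that $\lambda(f)=\frac1r\lambda(f_0)$ is false. A $C^r$-small perturbation of $f_0$ has $\lambda(f)$ close to $\lambda(f_0)$, not to $\lambda(f_0)/r$; the saddle is still there with eigenvalue $\Lambda$. So the inequality $h_\top(f)\leq\lambda(f)$ only gives $h_\top(f)\leq\log\Lambda$, which is useless. The upper bound $h_\top(f)\leq\frac1r\lambda(f_0)$ in the paper comes from Ruelle's inequality applied after the Lyapunov exponent bound is established for \emph{every} non-quasi-periodic measure, not from the Lipschitz constant.

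Second, and more importantly, your plan for item (2)---``every ergodic $\mu\in\Pq(f)$ lives on a single $\Lambda_n$'', with the $\Lambda_n$ mutually disjoint invariant sets---is exactly the interval-map strategy that the paper explicitly says cannot be carried out for surface diffeomorphisms. The supports of the perturbations are placed along the unstable manifold of the saddle, and their forward orbits accumulate on that manifold; they \emph{must} intersect one another and do not define invariant cycles. There is no topological Markov decomposition confining ergodic measures to individual horseshoes, and no obvious description of the complement. The paper replaces this entire line of argument by a direct pointwise estimate (its Proposition on Lyapunov exponents): for every $x$ outside the central square and every $v\neq0$, $\lambda(x,v)<\frac1r\log\Lambda-\chi\cdot(\text{frequency of visits to a transversal }\Delta)$. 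This is obtained by cutting orbits into ``affine segments'' (visits to the linearising neighbourhoods of the saddles), classifying segments as ``special'' or ``normal'' according to the angle of $v$, and bounding the expansion over each block. Since any non-quasi-periodic ergodic measure gives positive mass to $\Delta$, the strict inequality $\lambda^u(f,\mu)<\frac1r\log\Lambda$ follows; the supremum over $\mu$ equals $\frac1r\log\Lambda$ by the horseshoe construction plus the variational principle.

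So your proposal identifies the right construction but the wrong mechanism for the exponent control; the missing idea is the uniform orbit-by-orbit exponent estimate that replaces the (unavailable) invariant decomposition.
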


Using Ruelle's inequality \eqref{eq:Ruelle} and the zero entropy of quasi-periodic measures, we obtain:

\begin{mycor}\label{mythm2}
On any manifold of dimension at least $2$, for any real number $1\leq r<\infty$, there exists a $C^r$-diffeomorphism with finite topological entropy and no measure of maximal entropy.
\end{mycor}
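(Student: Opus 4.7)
The plan is to deduce the corollary from Theorem~\ref{mythm1}: first verify that the diffeomorphism $f \in \Diff^r_0(D)$ produced by the theorem admits no maximal entropy measure, and then transport this counter-example to an arbitrary compact manifold $M$ of dimension at least $2$ by embedding $D$ (possibly times a transverse box) into a chart and extending by the identity.

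For the first step, harmonicity of the entropy function under the ergodic decomposition reduces matters to ruling out \emph{ergodic} $\mu$ with $h(f,\mu) = h_\top(f)$. If $\mu$ is quasi-periodic, then $h(f,\mu) = 0$, strictly less than $h_\top(f) = \frac{1}{r}\lambda(f_0) > 0$. Otherwise $\mu \in \Pq(f)$, and Ruelle's inequality~\eqref{eq:Ruelle} combined with property (2) of Theorem~\ref{mythm1} yields
$$
h(f,\mu) \;\leq\; \lambda^u(f,\mu)^+ \;<\; \tfrac{1}{r}\lambda(f_0) \;=\; h_\top(f),
$$
where the strict inequality uses both $\lambda^u(f,\mu) < \frac{1}{r}\lambda(f_0)$ and $\frac{1}{r}\lambda(f_0) > 0$. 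Finiteness of $h_\top(f) = \frac{1}{r}\lambda(f_0)$ is immediate from compactness of $D$.

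For the second step, given $M$ of dimension $n \geq 2$, I embed $D$ (if $n=2$) or $D \times [-1,1]^{n-2}$ (if $n \geq 3$) diffeomorphically into a chart of $M$, and extend $f$ (respectively $f \times \operatorname{id}$) by the identity on the complement. Since $f$ is the identity on a neighborhood of $\partial D$, the resulting $\tilde f$ is a $C^r$ diffeomorphism of $M$. The identity factor carries no topological entropy, so $h_\top(\tilde f) = h_\top(f) < \infty$. The open set $U \subset M$ on which $\tilde f$ acts as the identity is invariant (its points are fixed), hence $M \setminus U$ is invariant too; therefore any ergodic $\tilde f$-invariant $\tilde\mu$ is concentrated either in $U$, where it is a Dirac at a fixed point with entropy zero, or in the embedded box. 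In the latter case, projecting onto the transverse $[-1,1]^{n-2}$ factor (trivial when $n=2$) gives an ergodic identity-invariant measure, necessarily a single Dirac $\delta_c$; hence $\tilde\mu$ identifies with $\mu \times \delta_c$ for some ergodic $f$-invariant $\mu$, and $h(\tilde f, \tilde\mu) = h(f, \mu) < h_\top(\tilde f)$ by the first step. Either way, no ergodic $\tilde f$-measure attains $h_\top(\tilde f)$, so by harmonicity none does.

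The substance of the corollary sits entirely in Theorem~\ref{mythm1}; that is where the main obstacle lies. What remains is the book-keeping above: ergodic decomposition, Ruelle's bound for $\Pq(f)$, the vanishing-entropy bound for quasi-periodic measures, and the invariant-splitting argument on $M$ exploiting that $\tilde f$ is the identity on an open set.
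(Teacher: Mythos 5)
Your first step is correct and is exactly the deduction the paper intends: ergodic decomposition plus harmonicity reduces to ergodic measures, quasi-periodic ones have zero entropy $<\frac1r\lambda(f_0)=h_\top(f)$, and for $\mu\in\Pq(f)$ Ruelle's inequality together with item (2) of Theorem~\ref{mythm1} gives $h(f,\mu)\leq\lambda^u(f,\mu)^+<h_\top(f)$. Your classification of the ergodic measures of the extended map (Dirac masses on the fixed locus, versus measures carried by the embedded piece that project to $f$-invariant measures of the same entropy) also matches the paper's proof.

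The transport step, however, fails as written when $\dim M\geq 3$. You thicken by a cube, forming $D\times[-1,1]^{n-2}$, and extend $f\times\operatorname{id}$ by the identity. But $f\times\operatorname{id}$ is \emph{not} the identity near the faces $D\times\partial\left([-1,1]^{n-2}\right)$: take $x$ in the interior of $D$ with $f(x)\neq x$ and $t\in\partial\left([-1,1]^{n-2}\right)$; then $(x,t)\mapsto(f(x),t)\neq(x,t)$, while points $(x,t')$ with $t'$ just outside the cube are fixed by your extension, so $\tilde f$ is not even continuous there. The hypothesis you invoke --- $f=\operatorname{id}$ near $\partial D$ --- only controls the portion $\partial D\times[-1,1]^{n-2}$ of the boundary of the box. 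The paper avoids this by thickening with a \emph{closed} factor: it sets $K:=D\times\TT^{d-2}$, so that $\partial K=\partial D\times\TT^{d-2}$ and $F=f\times\operatorname{id}$ really is the identity near all of $\partial K$, while $K$ still embeds in a $d$-ball and hence in any $M$. With that single substitution (or with some other device that damps the map to the identity in the transverse direction without destroying the entropy computation), the rest of your argument goes through.
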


We did not expect the above estimates since consideration of Ruelle's inequality which is typically strict\footnote{The Ruelle inequality is an equality (Pesin formula) iff the measure is of Sinai-Ruelle-Bowen type.} and the following fact (proved in Sect. \ref{sec:proofThm}) would suggest that exponents provide too rough a control:
 
\begin{myfact}\label{rem:bigexpo}
In the setting of Theorem \ref{mythm1} with $1<r<\infty$, 
 $$
    \limsup_{f\to^{C^\infty} f_0} \lambda^u(\Pq(f)) = \lambda(f_0) > \limsup_{f\to^{C^r} f_0} h_\top(f) = \frac{\lambda(f_0)}r.
 $$
\end{myfact}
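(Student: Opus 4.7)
The fact splits into two equalities, $\limsup_{f\to^{C^\infty} f_0}\lambda^u(\Pq(f))=\lambda(f_0)$ and $\limsup_{f\to^{C^r} f_0} h_\top(f)=\lambda(f_0)/r$, together with the strict inequality, which is automatic since $r>1$ and $\lambda(f_0)>0$ (by Theorem~\ref{mythm1}(1)). For both upper bounds, the common ingredient is upper semi-continuity of $\lambda$: since $\log\Lip(f^{n+m})\leq\log\Lip(f^n)+\log\Lip(f^m)$ and $f\mapsto\log\Lip(f^n)$ is $C^1$-continuous for each fixed $n$, Fekete's lemma gives $\lambda(f)=\inf_n \frac1n\log\Lip(f^n)$, exhibiting $\lambda$ as an infimum of $C^1$-continuous functions. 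The pointwise domination $\|(f^n)'(x).v\|\leq\Lip(f^n)\|v\|$ then forces $\lambda^u(f,\mu)\leq\lambda(f)$ for every ergodic $\mu$, so $\limsup_{f\to^{C^\infty} f_0}\lambda^u(\Pq(f))\leq\lambda(f_0)$. For the entropy side, the Newhouse--Yomdin inequality applied to a $C^r$ surface diffeomorphism near the zero-entropy $f_0$ yields $h_\top(f)\leq\frac1r\lambda(f)+o(1)$ as $f\to f_0$ in $C^r$, whence $\limsup_{f\to^{C^r} f_0} h_\top(f)\leq\lambda(f_0)/r$.

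The matching lower bound $\limsup_{f\to^{C^r} f_0} h_\top(f)\geq\lambda(f_0)/r$ is immediate from Theorem~\ref{mythm1}(1), which produces in every $C^r$-neighborhood of $f_0$ a diffeomorphism attaining exactly this entropy. The delicate remaining bound is $\limsup_{f\to^{C^\infty} f_0}\lambda^u(\Pq(f))\geq\lambda(f_0)$, and this is the main obstacle. One cannot simply appeal to Theorem~\ref{mythm1}(1), whose perturbations are only $C^r$-close to $f_0$ and moreover give $\lambda^u(\Pq(f))=\lambda(f_0)/r$, which is small for large $r$. Nor can one use a horseshoe of large expansion rate to realize the exponent: Newhouse's upper semi-continuity of $h_\top$ in the $C^\infty$ topology forces $h_\top(f)\to 0$ along any $C^\infty$-convergent sequence, ruling out uniformly hyperbolic invariant subsets with positive topological entropy in the limit.

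To overcome this, one inspects the construction of $f_0$: the positivity of $\lambda(f_0)$ is witnessed by an invariant family of quasi-periodic circles whose normal bundle expands at rate $\lambda(f_0)$. A sharply localized, $C^\infty$-small perturbation supported near one such circle can be tuned to break the quasi-periodic conjugacy on the circle (so that an ergodic invariant measure it carries falls outside the quasi-periodic class) while preserving almost all of the normal stretching. This yields an ergodic $\mu\in\Pq(f)$ with $\lambda^u(f,\mu)$ arbitrarily close to $\lambda(f_0)$, giving the required lower bound. The technical heart is the simultaneous control of the $C^\infty$ size of the perturbation, the quasi-periodic character of the resulting invariant measure (to guarantee membership in $\Pq(f)$), and the near-maximality of its Lyapunov exponent; this is consistent with $h_\top(f)\to 0$ because zero-entropy invariant measures with large Lyapunov exponent are compatible with Newhouse's bound. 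Combining the four inequalities and using $r>1$ yields the claimed strict inequality.
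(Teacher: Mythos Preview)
Your treatment of the two upper bounds and of the equality $\limsup_{f\to^{C^r} f_0} h_\top(f)=\lambda(f_0)/r$ is essentially the paper's argument and is fine. The gap is entirely in your proposed lower bound $\limsup_{f\to^{C^\infty} f_0}\lambda^u(\Pq(f))\geq\lambda(f_0)$.

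First, your premise about $f_0$ is wrong: there is no ``invariant family of quasi-periodic circles whose normal bundle expands at rate $\lambda(f_0)$.'' The number $\lambda(f_0)=\log\Lambda$ is realized near the hyperbolic fixed points at the corners of $[-1/2,1/2]^2$; in fact the construction explicitly arranges $\Lip(f_0|Q)<\Lambda$ on the inner square $Q$ (where any invariant circles would live), and orbits in $[-1/2,1/2]^2\setminus Q$ accumulate on the heteroclinic cycle $\partial[-1/2,1/2]^2$, not on an invariant circle. So your perturbation scheme has nothing to act on, and the sentence ``a sharply localized, $C^\infty$-small perturbation \dots\ can be tuned to break the quasi-periodic conjugacy'' is not a proof but a hope, unsupported by the geometry of $f_0$.

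Second, your reason for dismissing horseshoes is mistaken. Newhouse's upper semicontinuity forces $h_\top(f)\to 0$, but this does \emph{not} rule out horseshoes; it only forces their entropy to tend to zero. The paper exploits exactly this: it builds, for each $n$, a $C^\infty$-small perturbation $\bar f_{\geq n}$ supported near the unstable manifold of a corner, creating a horseshoe with a \emph{bounded} number of branches but return time $T_n\to\infty$. The periodic orbits of this horseshoe spend all but $O((\log n)^2)$ of their period inside the affine corner regions where the expansion is exactly $\Lambda$, so their maximal Lyapunov exponent is $\frac{1}{T_n+C}\bigl(T_n\log\Lambda - (\log n)^2 + O(\log n)\bigr)\to\log\Lambda$, while the entropy of the horseshoe is $O(1/T_n)\to 0$. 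This is the missing construction.
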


\bigbreak

Our constructions also apply in the setting of partially hyperbolicity (see, e.g., \cite{BDVbook}).
It has been shown in that setting, that, when the center bundle is one-dimensional (or even a dominated sum of one-dimensional subbundles), there always is a measure of maximal entropy \cite{DFPV2011}. Our examples show that this does not extend to center dimension $2$:

\begin{mycor}\label{coro2}
On the $d$-torus, for any $d\geq4$, there exists a partially hyperbolic diffeomorphism of manifolds with center dimension $2$ and no measure of maximal entropy.
\end{mycor}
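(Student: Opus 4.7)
The plan is to reduce to the setting of Corollary \ref{mythm2} via a product construction with a linear Anosov map. I would take the $C^r$-diffeomorphism $f$ of the disk $D$ provided by Corollary \ref{mythm2}, which has positive finite topological entropy and no maximal measure, and extend it by the identity, via an embedding $D\hookrightarrow\TT^2$, to a $C^r$ diffeomorphism $\tilde f$ of $\TT^2$. The first observation is that $\tilde f$ still has no maximal measure: any ergodic $\tilde f$-invariant measure is either supported in $D$ (and then corresponds to an ergodic $f$-invariant measure) or is a Dirac mass at a fixed point in $\TT^2\setminus\inter D$ (hence of zero entropy), whereas $h_\top(\tilde f)=h_\top(f)>0$.

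Next, I would fix a hyperbolic toral automorphism $A$ of $\TT^{d-2}$, which exists as soon as $d-2\geq 2$ (for instance, a companion matrix of an irreducible monic integer polynomial with no roots on the unit circle). Write $0<\mu^s<1<\nu^u$ for its extreme stable and unstable rates. After replacing $A$ by an iterate $A^N$, one may assume
$$(\mu^s)^N\cdot\Lip(\tilde f^{-1})<1\qquad\text{and}\qquad (\nu^u)^N>\Lip(\tilde f).$$
I would then define
$$F(x,y):=(\tilde f(x),A^N y)\qquad\text{on }\TT^d=\TT^2\times\TT^{d-2}.$$
The splitting $TF=E^s_A\oplus T\TT^2\oplus E^u_A$ is $DF$-invariant, and the two inequalities above translate, via $\Lip(\tilde f)=\sup\|D\tilde f\|$ and $\Lip(\tilde f^{-1})=\sup\|D\tilde f^{-1}\|$, into the two required domination estimates, so $F$ is partially hyperbolic with two-dimensional centre $T\TT^2$.

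Finally, to exclude maximal measures for $F$: the key ingredient is the product upper bound
$$h(F,\mu)\leq h(\tilde f,\mu_1)+h(A^N,\mu_2)$$
valid for any $F$-invariant probability $\mu$ with marginals $\mu_1$ on $\TT^2$ and $\mu_2$ on $\TT^{d-2}$; it follows from subadditivity of Shannon entropy applied to refined product partitions $P_n\times Q_n$. Combined with the standard product formula $h_\top(F)=h_\top(\tilde f)+h_\top(A^N)$, any maximal measure $\mu$ of $F$ would force $h(\tilde f,\mu_1)=h_\top(\tilde f)$, and an ergodic component of maximal entropy of $\mu_1$ would then be a maximal measure for $\tilde f$, contradicting the first step.

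The main obstacle I anticipate is verifying the partial hyperbolicity cleanly, since the centre direction only admits the global Lipschitz estimates on $\tilde f^{\pm 1}$ rather than any hyperbolic structure of its own; however, the pointwise identity $\Lip(\tilde f)=\sup_x\|D\tilde f(x)\|$ is exactly what the pointwise domination condition requires, so the argument goes through without extra work.
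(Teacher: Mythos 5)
Your proposal is correct and follows essentially the same route as the paper: extend the disk example to $\TT^2$, take the product with a sufficiently high iterate $A^N$ of a hyperbolic automorphism of $\TT^{d-2}$ chosen so that its extreme expansion and contraction rates dominate $\Lip(\tilde f)$ and $\Lip(\tilde f^{-1})^{-1}$, and exclude maximal measures via subadditivity of entropy under the two projections combined with the product formula for topological entropy. The only difference is presentational: you make explicit the extension from $D$ to $\TT^2$ and the passage to an ergodic component, both of which the paper leaves implicit.
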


\subsection{Comments}

\subsubsection*{Comparison with interval maps}
It is intructive to compare the examples presented here with those built for interval maps (see \cite{Buzzi1997, ruette2002}). They are very similar in being based on a sequence of horseshoes homoclinically related to a given hyperbolic fixed point and with topological entropies converging to, but not achieving the limit $\log\Lambda/r$, where $\Lambda$ is the expanding eigenvalue at the fixed point.

There is however a significant difference which accounts for the relative difficulty (and delay) in building the  diffeomorphisms of this paper. In \cite{Buzzi1997}, there is a sequence of perturbations, each supported by some interval $I_n$, $n\geq1$, and giving rise to a $N_n$-to-$1$ horseshoe which is a forward-invariant disjoint union of intervals $\bigcup_{k=0}^{T_n-1}f^k(I_n)$ and entropy $h_\top(f)=\log N_n/T_n$. The absence of maximal entropy measure follows from the fact that there would be an ergodic one, therefore supported either by a single horseshoe or disjoint from their union. But one can arrange so that $\log N_n/T_n\nearrow \log\Lambda/r$ while $f|\overline{[0,1]\setminus\bigcup_{n,k} f^k(I_n)}$ has smaller topological entropy (the construction in \cite{ruette2002} is subtler, but still relies on an explicit topologically Markov description of the dynamics).

In the construction below, the supports of the perturbations do not define invariant cycles. Indeed, the orbit of their supports accumulate along the unstable manifold of the hyperbolic fixed point, hence they must intersect one another. 

We note that this seems also to prevent a straightforward modification of these examples to construct an example with infinitely many ergodic measures of maximal entropy as was done on the interval by taking $\log N_n/T_n$ constant.

\subsubsection*{Non-differentiable examples}

Ergodic theory sometimes differs greatly between $C^1$ and higher differentiability (see, e.g., Pesin theory or Pugh's closing lemma). However, with regard to existence of measures with maximal entropy, the threshold seems to be more between Lipschitz and non-Lipschitz homeomorphisms. The $C^1$ case is not distinguished, though the analysis of the example simplifies significantly (see Remark \ref{rem:C1}). We note that the Lipschitz case is similar to the case of interval maps. In particular, examples with infinitely many maximal measures are easily obtained (see Appendix \ref{app:nondiff} where folklore examples are described for completeness). For piecewise affine \emph{maps} of compact surfaces, there are examples with no maximal measures \cite{buzziNoMaxAff2009}. For piecewise affine \emph{homeomorphisms} of compact surfaces, Newhouse observed that maximal measures always exist and this author in \cite{buzzi2009pwah} showed that there are finitely many of them.

\subsection{Questions}

\subsubsection*{Does "large entropy" imply existence of a maximal entropy measure?} Indeed, the examples built here satisfy $h_\top(f)\leq\log\Lip(f)/r$. This is known in the setting of interval  maps \cite{buzziruette2006,Burguet2012a}.

\subsubsection*{For diffeomorphisms of $3$-dimensional compact manifolds, does the existence of a dominated splitting imply that of a maximal entropy measure?} We have noted the result of \cite{DFPV2011} which shows that strong partial hyperbolicity (i.e., a splitting $E^s\oplus E^c\oplus E^u$ with no trivial subbundles, hence all of dimension $1$) implies existence. An intermediate question would be: whether (weak) partial hyperbolicity (i.e., a dominated splitting of the type $E^1\oplus E^2$ with $E^1$ uniformly expanding ---or contracting--- and $\dim E^2=2$) implies the existence of a maximal entropy measure.

\subsubsection*{What about infinite multiplicity?} Sarig's construction \cite{sarig2011} of a countable state Markov shift for  $C^{1+\alpha}$ smooth, surface diffeomorphisms shows that, for such maps with positive topological entropy the collection of ergodic measures of maximal entropy is at most countable. However, no examples of $C^r$ diffeomorphisms of compact manifolds with infinitely many ergodic measures with maximal entropy are known for $r\geq1$. We note that it is unknown if a bi-Lipschitz homeomorphism of a compact surface could have \emph{uncountably} many such measures.

\subsection{Outline of the paper and Notations}
We first state the useful properties of the diffeomorphism $f_0$ (Sect. \ref{sec:homomap}). Then we explain the sequence of $C^r$ perturbations used to create entropy for our examples $f$ (Sect. \ref{sec:perturb}). The main point is the control of the Lyapunov exponent of non-quasi-periodic orbits (Sect. \ref{sec:exponent}). We then conclude in Sect. \ref{sec:proofThm} with the proofs of the main results.

Two appendices are provided for the convenience of the reader. Appendix \ref{app:nondiff} describes the situation for Lipschitz homeomorphisms and general homeomorphisms. Appendix \ref{app:construct} gives the rather easy details of the construction of the map $f_0$.

\subsubsection*{Notations} We denote by the same letter $C$ different numbers larger than one and independent from the parameters of our construction: $K,L,n_0$ (see Sect. \ref{sec:homomap} and \ref{sec:perturb}). $\pm C$ denotes a number within $[-C,C]$. We denote by $C(K)$, $C(K,L)$,... different functions of $K$, of $K,L$,... 

$\mathcal O(\cdot)$ and $o(\cdots)$ are the usual Landau notations, the limit here being $K,L,n_0\to\infty$ (possibly under some constraints specified in the context). $\Omega(f(K,L,n_0))$ denotes positive functions which are equal to $f(K,L,n_0)$ up to a bounded, positive factor when $K,L,n_0$ are large enough. 

\section{Homoclinic map $f_0:D\to D$}\label{sec:homomap}

We start from a diffeomorphism on the disk $D=\overline{B(0,2)}\subset\RR^2$ exhibiting a homoclinic loop at a strongly dissipative, hyperbolic fixed point. For convenience, it is defined as a map  $f_0:D\to D$ with four such fixed points, identified by a four-fold symmetry given by: $\tau_0(x,y)=(x,y)$, $\tau_1(x,y)=(y,-x)$, $\tau_2(x,y)=(-x,-y)$, $\tau_3(x,y)=(-y,x)$. The following statement summarizes the properties we shall use (see Figure \ref{fig:homoloop}).

\begin{figure}
\includegraphics[width=8cm]{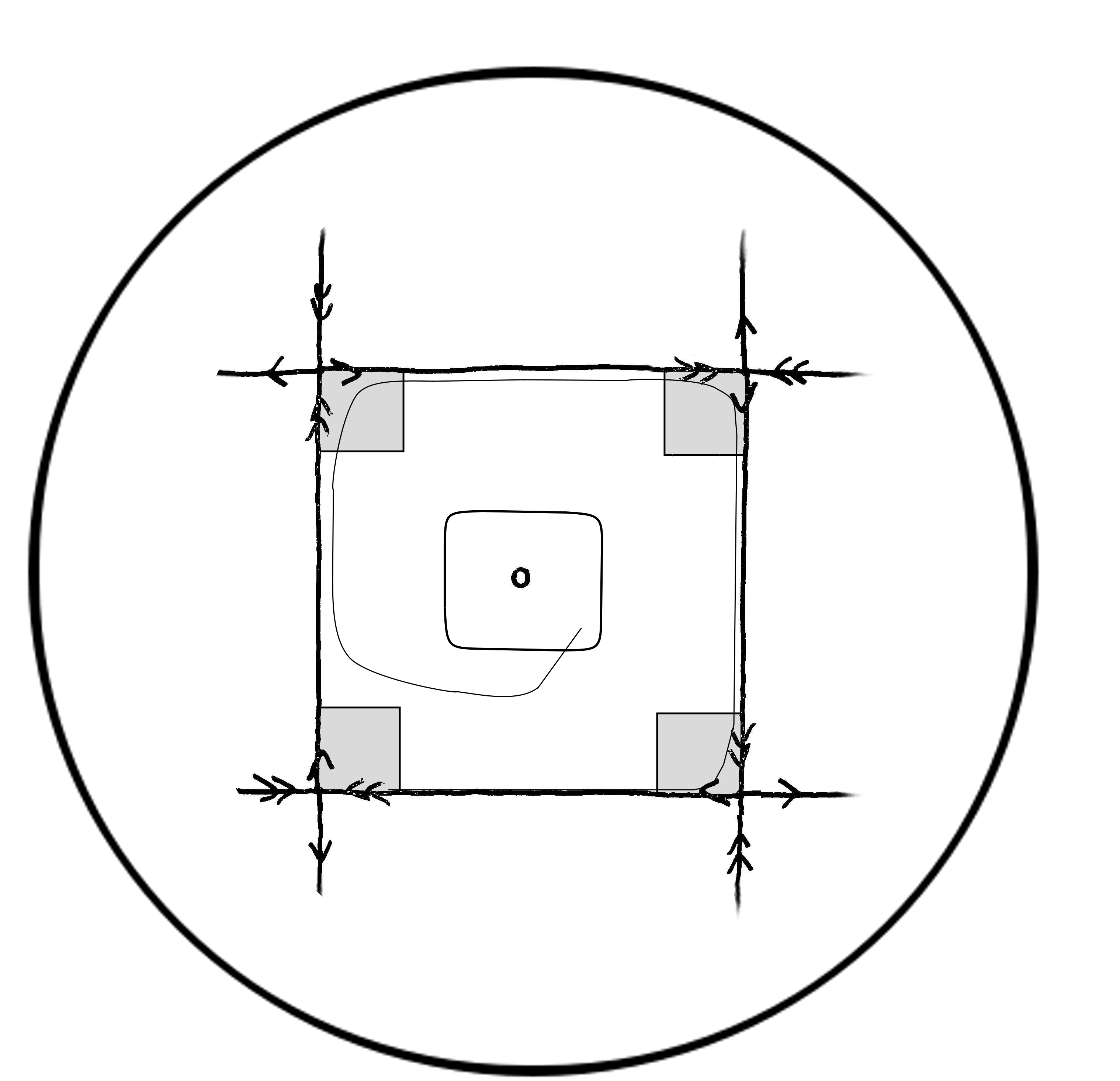}
\caption{The homoclinic map defined inside $D$ with the four dissipative, hyperbolic fixed points at the corners of the square $[-1/2,1/2]^2$ and their heteroclinic connexions and the central, repelling fixed point at $(0,0)$. The four little grey squares are $C$. The centered, smoothed square is $\partial Q$. The spiralling curve is a typical orbit inside $[-1/2,1/2]^2$.}\label{fig:homoloop}
\end{figure}

\begin{prop}\label{prop:factsH}
For all parameters $K,L>C$, there is a $C^\infty$ smooth vector field such that its time $1$ map $f_0:D\to D$ satisfies the following properties. Let $Q:=[-1/6,1/6]^2$ and $C_0:=[-1/2,-5/12]^2$.
\begin{enumerate}
   \item\label{item:nearbound} $f_0$ coincides with the identity near the boundary of $D$, $\lambda(f_0)=\log\Lambda$ and $f_0\circ\tau_i=\tau_i\circ f_0$ for $i=1,2,3$;
  \item\label{item:corner} $f_0|C_0=((x,y)\mapsto(K^{-1}(x+1/2)-1/2,\Lambda (y+1/2)-1/2))$;
 \item\label{item:cycle} All ergodic invariant probability measures are quasi-periodic (i.e., either Dirac measures or with support included on an invariant  topological circle);
 \item\label{item:filter} for all $k\geq C(K,L)$, $f_0^k([-1/2,1/2]^2\setminus Q)\subset [-1/2,1/2]^2\setminus [-5/12,5/12]^2$. Moreover, the omega-limit set of any point in that set coincides with $\partial [-1/2,1/2]^2$.
 \item\label{item:transit} for $x\in f_0(C_0)\setminus C_0$, the transition time to the next affine corner:
  $$
     \tau(x):=\min\{n\geq1:f_0^n(x)\in\tau_{1}(C_0)\}
  $$
satisfies $\tau(x)=c_1L\pm c_2$ where $c_1,c_2$ are two positive constants.
 \item\label{item:transit1} Let $x:=(x_1,x_2)\in f_0(C_0)\setminus C_0$ and $\tau:=\tau(x)$, 
 $$\begin{aligned}
           &f_0^\tau(x): =\left(\tilde\alpha(x_2)(x_1+1/2)-1/2,\tilde h(x_2)\right)\\  &
           \text{ and }T_xf_0^{\tau} = \mat{K^{-\Omega(1)} & -|\mathcal O(K^{-\Omega(1)}(x_1+1/2))| \\ 0 & \Omega(\kappa)}.
  \end{aligned}$$
The implied bounds above can be chosen independently of $K$ and $L$.
 \item\label{item:transit2} There exist a constant $u<1$, independent of $L$ such that for $\kappa\geq 2\lambda$, for all $x\in C_0$,
 \begin{equation}\label{eq:transit1}
     (f_0^\tau(x))_1 \leq u x_1.
 \end{equation}
 \end{enumerate}
\end{prop}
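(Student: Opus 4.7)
The plan is to realize $f_0$ as the time-$1$ map of a $\tau_i$-equivariant $C^\infty$ vector field $X$ on $D$ that vanishes in a neighborhood of $\partial D$ and has exactly five zeros in the interior: a source at the origin and four hyperbolic saddles at the corners of $[-1/2,1/2]^2$, linked cyclically by heteroclinic orbits along the edges of that square. All seven items will then follow from appropriate local models, and by the four-fold symmetry one only needs to design $X$ on a fundamental quarter.

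Concretely, I would first fix $X$ on a neighborhood of $C_0$ to be the linear field $X(x,y)=(-(\log K)(x+1/2),(\log\Lambda)(y+1/2))$, so that its time-$1$ map is exactly the affine map of item (2); by symmetry this gives four affine boxes, one near each corner. Next, along the edge $y=1/2$ (and its $\tau_i$-images) I would interpolate $X$ smoothly so that the unstable manifold of $(-1/2,-1/2)$ joins the stable manifold of $(-1/2,1/2)$ as a heteroclinic orbit, with the flow slowed so that the transit time $\tau$ satisfies $\tau(x)=c_1L\pm c_2$, giving item (5). Working in flow-box coordinates that straighten this heteroclinic connection to a horizontal line, the transition map preserves a horizontal foliation and therefore has upper-triangular derivative; this yields the shape of $T_xf_0^\tau$ in item (6), with the $K^{-\Omega(1)}$ and $\Omega(\kappa)$ diagonal entries coming from the exit and entry times spent inside the affine boxes $C_0$ and $\tau_1(C_0)$. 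Finally, in the interior of $[-1/2,1/2]^2$ between these corner neighborhoods I would take $X$ to spiral radially outward from the origin, so that every forward orbit other than $\{0\}$ eventually leaves any prescribed neighborhood of $0$ and accumulates on $\partial[-1/2,1/2]^2$; this gives item (4).

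From this construction, items (1), (2), and (5)--(7) are either immediate or follow from short calculations, item (7) being obtained by writing $(f_0^\tau(x))_1=(K^{-\Omega(1)}+o(1))\,x_1$, which is bounded by $ux_1$ for some $u<1$ once $K$ is large, uniformly in $L$. Item (3) follows from the observation that on $[-1/2,1/2]^2$ the non-wandering set reduces to the five critical points and the heteroclinic loop, so every ergodic invariant measure inside the square is a Dirac mass; outside the square $f_0$ agrees with the identity except possibly inside invariant topological circles supporting rotations, and the remaining ergodic measures are therefore quasi-periodic.

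The main obstacle I expect is arranging the smooth $\tau_i$-equivariant interpolation in the heteroclinic transition so as simultaneously to match the affine boxes to arbitrarily high order, to produce the upper-triangular Jacobian of item (6) with the prescribed orders of magnitude in $K$ and $\kappa$ uniform in $L$, and to leave no spurious recurrence inside the square beyond the critical points. The combination of flow-box coordinates straightening both invariant manifolds along each heteroclinic edge with a symmetry-respecting partition of unity is what makes this workable without disturbing the Lipschitz bound $\lambda(f_0)=\log\Lambda$.
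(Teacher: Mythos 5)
Your overall design is the same as the paper's: $f_0$ is realized as the time-one map of a $\tau_i$-equivariant $C^\infty$ vector field vanishing near $\partial D$, with a repelling fixed point at the origin, four saddles at the corners of $[-1/2,1/2]^2$ joined by a heteroclinic cycle along the edges, exactly linear models in the corner boxes, a slowing factor on a transition strip to force $\tau(x)=c_1L\pm c_2$, and a Poincar\'e--Bendixson argument for item (3). Items (1), (2), (4), (5) and (7) then reduce to the same transit-time bookkeeping the paper carries out (for $\lambda(f_0)=\log\Lambda$ the paper additionally damps the field on a neighborhood of $Q\cup(D\setminus[-7/12,7/12]^2)$ so that the Lipschitz constant there is below $\Lambda$; this is routine but not automatic).

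The one place where your argument has a genuine gap is item (6), which is precisely the point the paper flags as delicate. Flow-box coordinates straighten a single orbit (or a tube of orbits), but they do not by themselves give that $(f_0^\tau)_2$ depends only on $x_2$, and --- more importantly --- they do not give that the off-diagonal entry is $\mathcal O\bigl(K^{-\Omega(1)}(x_1+1/2)\bigr)$, i.e.\ vanishes to first order on the heteroclinic loop, with implied constants independent of $K$ and $L$. A generic smooth interpolation matching the two affine corner boxes would produce an off-diagonal term of size $\mathcal O(1)$ rather than $\mathcal O(x_1+1/2)$, which would destroy Lemma \ref{lem:angletransition} and with it the entire exponent estimate of Section \ref{sec:exponent}. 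The paper secures the required form by \emph{building} the field on the transition strip as a product, $V_0(x,y)=\alpha_L(x,y)\,(X_0(x),A_0(x)\,y)$ with $\alpha_L$ depending only on $x$ there: the travel coordinate evolves autonomously and the transverse coordinate evolves linearly, so the transition map is literally $(x_1,x_2)\mapsto(\tilde\alpha(x_2)(x_1+1/2)-1/2,\;\tilde h(x_2))$ with $\tilde\alpha(x_2)=\exp\int_0^\tau A_0(x(s))\,ds$, and the off-diagonal entry $\tilde\alpha'(x_2)(x_1+1/2)$ carries the factor $(x_1+1/2)$ by inspection; uniformity in $L$ is obtained by factoring the transition through an $L$-independent passage map, the $L$-dependence being a mere time reparametrization. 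Some such explicit structural choice (or, at minimum, invariance of the loop together with second-derivative bounds uniform in $K,L$ despite the $\Omega(L)$ transit time) must be added to your construction before item (6) can be considered proved.
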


For the sake of definiteness an explicit construction is given in Appendix \ref{app:construct}. It is completely elementary and without difficulty. The only point we would like to stress is that the transition between two visits to the affine neighborhoods of the hyperbolic fixed points cannot exactly map the (previously) expanding direction to the (to be) contracting direction. Nevertheless the angular error is small enough (proportional to the distance to the homoclinic loop) so that it does not spoil the estimates.


\section{Perturbations}\label{sec:perturb}

We are going to perturb the map $f_0:D\to D$ to create a sequence of horseshoes with entropies strictly increasing to $\log\Lambda/r$. The resulting map $f$ will have entropy $h_\top(f)=\log\Lambda/r$ and no measure of maximal entropy. This would follow immediately {\it provided there would be no other sources of entropy}, but we are not able to prove it directly. Fortunately, one can prove a strong enough bound on the exponents, see Sec. \ref{sec:exponent}.

\subsection{Construction}

We will make perturbations inside the four corners $\tau_i(C_0)$, $i=0,1,2,3$,  keeping the four-fold symmetry. We only need to specify the perturbations inside the corner $C_0$. It will be convenient to use coordinates identifying $C_0$ to $[0,2]^2$ and $f_0$ to the linear map defined by $A=\mat{K^{-1}&0\\0&\Lambda}$.

The perturbations are indexed by $n\in\NN$, $n\geq n_0$, with $n_0$ an arbitrarily large integer. The $n$th perturbation will be supported around $I_n\times\{0\}:=[1+1/n^2,1+1/n^2+1/n^4]\times\{0\}$. It will be made of $N_n$ "wiggles" pushing upward the unstable manifold of the fixed point $(1/2,-1/2)$ in the original coordinates by $\Lambda^{-T_n}$. $N_n$ and $T_n$ are two integer parameters ($T_n$ being, up to an additive constant, the return time), both much larger than $n$.

More precisely, the $n$th perturbation is supported in
 $$
    R_n:=[a_n,b_n]\times [-\ell_n/N_n,\ell_n/N_n]
 $$
with $a_n:=1+1/n^2$, $b_n:=a_n+1/n^4$ and $\ell_n:=b_n-a_n=1/n^4$.
On $R_n$, the original map $f_0:(x,y)\mapsto(K^{-1}x,\Lambda y)$ is replaced by $f_n:=f_0\circ g$ with:
 \begin{equation}\label{eq:formula}
    g:(x,y)\mapsto \left(x, y+\alpha_n(x,y)\Lambda^{-T_n}\left\{2+\sin(\pi N_n(x-a_n)/\ell_n)\right\}\right)
 \end{equation}
with cut-off function
 $$
    \alpha_n(x,y)= \alpha\left(N_n(x-a_n)/\ell_n\right)
                            \alpha\left(N_n(b_n-x)/\ell_n\right)
                            \beta\left(N_ny/\ell_n\right)
$$
where $\alpha:\RR\to[0,1]$ $C^\infty$ smooth, non-decreasing such that $\alpha'(x)=o(\alpha^{1-1/r})$, $\alpha(x)=0$ for $x\leq0$ and $\alpha(x)=1$ for $x\geq 1$ and $\beta:\RR\to[0,1]$ $C^\infty$ smooth, such that $\beta(x)=1$ for $|x|\leq1/2$ and $\beta(x)=0$ for $|x|\geq 1$ and monotone on $\RR_-$ and on $\RR_+$.

Finally, we define $f_{\geq n_0}:D\to D$ by $f_{\geq n_0}(x)=f_n(x)$ if $x\in R_n$ for some $n\geq n_0$ and $f_{\geq n_0}(x)=f_0(x)$ otherwise.

\begin{lem}
Assuming $N_n=o(\Lambda^{-T_n/r}/n^4)$, the $C^\infty$ maps $f_{\geq n_0}$ is $C^r$ and satisfies $\lim_{n_0\to\infty}\|f_{\geq n_0}-f_0\|_{C^r}=0$. Also, for $n_0$ large enough:
 \begin{equation}\label{eq:majder}
         \left|\frac{\partial f_{\geq n_0}(x,y)_2}{\partial x}\right| \leq (f_{\geq n_0}(x,y))_2^{1-1/r}.
 \end{equation}
\end{lem}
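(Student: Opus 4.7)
The plan is to handle the three claims --- global $C^r$ smoothness of $f_{\geq n_0}$, convergence $f_{\geq n_0}\to f_0$ in $C^r$, and the pointwise derivative bound \eqref{eq:majder} --- in sequence.

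First, I would observe that the supports $R_n$ are pairwise disjoint rectangles whose only accumulation point (in the rescaled coordinates on $C_0$) is $p:=(1,0)$, and that each cutoff $\alpha_n$ is $C^\infty$ with all derivatives vanishing outside its support (since $\alpha(t)=0$ for $t\leq 0$ is $C^\infty$, and likewise for $\beta$ at $|t|\geq 1$). Because $f_0$ is $C^\infty$ on $D$ and each $f_n=f_0\circ g$ is $C^\infty$ on $R_n$ with $\phi_n$ vanishing to all orders along $\partial R_n$, the piecewise definition of $f_{\geq n_0}$ glues to give a $C^\infty$ map on $D\setminus\{p\}$. $C^r$ regularity at $p$ itself will be a by-product of the quantitative estimates below.

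Second, I would estimate the $C^r$ norm of the perturbation $f_n-f_0=(0,\Lambda\phi_n)$ on $R_n$. The key input is that $\phi_n$ has amplitude $O(\Lambda^{-T_n})$ and characteristic frequency $N_n/\ell_n=N_n n^4$ in either variable (from the sine and from the cutoffs $\alpha$, $\beta$), so every partial derivative of order $k$ has size $\lesssim \Lambda^{-T_n}(N_n n^4)^k$ for $k\leq[r]+1$. Interpolating between the $[r]$th and $([r]+1)$th derivatives, the $(r-[r])$-H\"older modulus of $D^{[r]}\phi_n$ is $\lesssim \Lambda^{-T_n}(N_n n^4)^r$. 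Under the intended assumption $N_n\cdot n^4=o(\Lambda^{T_n/r})$ (as required for the estimates to be useful), this is $o(1)$ as $n\to\infty$. Disjointness of the $R_n$ and the flatness of $\phi_n$ at $\partial R_n$ then promote these bounds to the global statement $\|f_{\geq n_0}-f_0\|_{C^r(D)}\to 0$ as $n_0\to\infty$, which simultaneously gives $C^r$ regularity at $p$.

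Third, for the derivative bound, on $R_n$ we have $(f_n)_2=\Lambda(y+\phi_n)$, hence $\partial_x(f_n)_2=\Lambda\partial_x\phi_n$. Differentiating $\phi_n=\alpha_n\Lambda^{-T_n}(2+\sin(\pi N_n(x-a_n)/\ell_n))$ in $x$ produces a sine-derivative term of size $O((N_n n^4)\Lambda^{-T_n}\alpha_n)$ and a cutoff-derivative term of size $o((N_n n^4)\Lambda^{-T_n}\alpha_n^{1-1/r})$, where the latter uses the condition $\alpha'(t)=o(\alpha(t)^{1-1/r})$. Since $\alpha_n\leq 1$, both are bounded by $C(N_n n^4)\Lambda^{-T_n}\alpha_n^{1-1/r}$. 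On the other hand, $2+\sin\geq 1$ gives $\phi_n\geq\Lambda^{-T_n}\alpha_n$, and in the region where the bound is meaningful $(f_n)_2\gtrsim\Lambda^{1-T_n}\alpha_n$, so $(f_n)_2^{1-1/r}\gtrsim\Lambda^{(1-T_n)(1-1/r)}\alpha_n^{1-1/r}$. Dividing, the desired inequality reduces to $N_n n^4\leq C'\Lambda^{(T_n-1)/r}$, which holds for $n_0$ large by the hypothesis.

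The main obstacle is the precise matching in the last step between the cutoff profile $\alpha'=o(\alpha^{1-1/r})$ and the bound involving $\alpha_n^{1-1/r}$: a naive profile $\alpha(t)\sim t^r$ near $0$ would give only $\alpha'=O(\alpha^{1-1/r})$ and would fail to yield the strict inequality \eqref{eq:majder}. The $o(\cdot)$ condition is precisely what allows the constant factors to be absorbed once $n_0$ is sufficiently large. A secondary technical point is the regime where $y$ and $\phi_n$ have competing signs; here the assumption on $N_n$ forces $\phi_n$ to dominate $|y|$ precisely on the locus where the perturbation is active, reducing to the previous case.
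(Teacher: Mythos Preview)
Your argument is correct and follows essentially the same route as the paper: the $C^r$ convergence comes from the bound $\|D^k\phi_n\|\lesssim \Lambda^{-T_n}(N_n n^4)^k$, and the pointwise estimate \eqref{eq:majder} comes from splitting $\partial_x\phi_n$ into the sine term and the cutoff term, the latter being controlled by the hypothesis $\alpha'=o(\alpha^{1-1/r})$ (the paper makes the three-case split on the position of $x$ explicit, but that is exactly your observation that only one boundary cutoff can be active at a time). One small correction: your closing claim that ``the assumption on $N_n$ forces $\phi_n$ to dominate $|y|$'' is false on $R_n$, since $|y|$ can be of order $\ell_n/N_n\gg\Lambda^{-T_n}$; however this regime is irrelevant, as \eqref{eq:majder} is only meaningful (and only applied, cf.\ the next lemma) where $y\geq 0$, in which case $(g(x,y))_2\geq\phi_n\geq\alpha_n\Lambda^{-T_n}$ holds without any competition.
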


Inequality \eqref{eq:majder} above is a consequence of the fact that our perturbations push the unstable manifold up (and never down, thanks to the constant '2' in \eqref{eq:formula}).

\begin{proof}
Thanks to the cut-off functions, the functions $f_n$ are $C^\infty$. Observe that the rectangles $R_n$ are pairwise disjoint so that $f_{\geq n_0}$ is well-defined. It also follows that $f_{\geq n_0}$ is $C^r$ smooth if
 \begin{equation}\label{eq:CrCondition}
  \lim_{n\to\infty} \|f_n-id\|_{C^r}= 0.
 \end{equation}
In this case, $\|f_{\geq n_0}-f_0\|_{C^r}=\sup_{n\geq n_0} \|f_n-f_0\|_{C^r}$.

Now, the partial derivatives of $g_n$ of any order $s\leq r$ are bounded by $C(\ell_n^{-1}N_n)^s\Lambda^{-T_n}\leq C(n^4\Lambda^{-T_n/r}N_n)^r$. Thus \eqref{eq:CrCondition} is satisfied as $N_n=o(\Lambda^{T_n/r}/n^4)$.

The same estimate shows that taking $n_0$ large enough makes the perturbation arbitrarily small in the $C^r$ norm.

We turn to the remaining claim \eqref{eq:majder}.
On the one hand, for $(x,y)\in R_n$,  $(g(x,y))_2 \geq \alpha_n(x,y)\Lambda^{-T_n}$. On the other hand,
 $$\begin{aligned}
   \left|\frac{\partial\alpha_n(x,y)}{\partial x}\right|
    &\leq  \frac{N_n}{\ell_n} \times  \left\{\begin{matrix}\alpha'(N_n(x-a_n)/\ell_n)\alpha(N_n(b_n-x)/\ell_n)\beta(N_ny/\ell_n)  \\
                0  \\
                -\alpha(N_n(x-a_n)/\ell_n)\alpha'(N_n(b_n-x)/\ell_n)\beta(N_ny/\ell_n) \end{matrix}\right. \\
   &\leq \frac{N_n}{\ell_n} o(\alpha_n(x,y)^{1-1/r}).
 \end{aligned}$$
where we have used $|\alpha'(u)|= o(\alpha(u)^{1-1/r})$, $|\alpha(u)|, |\beta(u)|\leq 1$ and distinguished three cases depending on the position of $x$ with respect to $a_n+\ell_n/N_n$ and $b_n-\ell_n/N_n$.
Therefore,
 $$\begin{aligned}
    \left|\frac{\partial g(x,y)_2}{\partial x}\right| &\leq \Lambda^{-T_n} \times \left(
        3\left|\frac{\partial\alpha_n(x,y)}{\partial x}\right|
      + \pi\frac{N_n}{\ell_n} \alpha_n(x,y)\right)\\
   &\leq \frac{\Lambda^{-(1-1/r)T_n}}n \left( o(\alpha_n(x,y)^{1-1/r}) + \pi\alpha_n(x,y)\right) \\
   &\leq (g(x,y))_2^{1-1/r}.
 \end{aligned}$$
Finally, observe that $|\partial f_2/\partial x|=\Lambda |\partial g_2/\partial x|$ and $f(x,y)_2=\Lambda g(x,y)_2$.
\end{proof}

\begin{lem}\label{lem:perturb0}
If $(x,y)\in R_n$ and $\tau=\min\{k\geq0:f^k(x,y)\notin C_0\}$, then:
 $$
     f_n'(x,y) = \mat{K^{-1} & 0 \\ \mathcal O(1)\Lambda^{-(1-1/r)\tau} & \Lambda+\mathcal O(\Lambda^{-T_n}) }.
 $$
\end{lem}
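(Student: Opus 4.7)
The plan is a direct chain-rule computation. Within the corner $C_0$, the map $f_0$ acts as the linear map $A = \mat{K^{-1} & 0 \\ 0 & \Lambda}$, and by \eqref{eq:formula} the perturbed map is $f_n = f_0 \circ g$, so $f_n'(x,y) = A \cdot g'(x,y)$. Since $g(x,y) = (x, g_2(x,y))$ fixes the first coordinate, $g'$ is lower-triangular with top row $(1,0)$ and bottom row $(\partial_x g_2,\, \partial_y g_2)$; multiplying by $A$ on the left produces the announced matrix shape with $(1,1) = K^{-1}$, $(1,2) = 0$, $(2,1) = \Lambda \partial_x g_2$ and $(2,2) = \Lambda \partial_y g_2$.

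For the $(2,1)$-entry, I invoke inequality \eqref{eq:majder} applied to $f_{\geq n_0}$ at $(x,y)$: it immediately gives $|\partial_x (f_n)_2| \leq (f_n(x,y))_2^{\,1-1/r}$. It then suffices to show $(f_n(x,y))_2 = \mathcal{O}(\Lambda^{-\tau})$. Starting from $(x,y) \in R_n$ (which sits in a very thin horizontal slab around $\{y=0\}$), the first application of $f_n$ raises the $y$-coordinate to a value of order $\Lambda^{-T_n+1}$. This is already much larger than the vertical half-width $\ell_m/N_m$ of every perturbation rectangle $R_m$, so the subsequent iterates $f_n^k(x,y)$ for $1 \leq k < \tau$ all lie in $C_0 \setminus \bigcup_m R_m$, where $f_{\geq n_0}$ coincides with the linear map $f_0 = A$. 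Thus the $y$-coordinate is simply scaled by $\Lambda$ at each of these $\tau-1$ intermediate steps, and since $f_n^\tau(x,y)$ is the first iterate outside $C_0 = [0,2]^2$, we have $(f_n^{\tau-1}(x,y))_2 = \Theta(1)$, hence $(f_n(x,y))_2 = \Theta(\Lambda^{-(\tau-1)})$. Raising to the power $1-1/r$ gives the bound $\mathcal{O}(1)\Lambda^{-(1-1/r)\tau}$ on $|\partial_x (f_n)_2|$, as claimed.

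For the $(2,2)$-entry, only the cut-off $\alpha_n$ depends on $y$ (through the factor $\beta(N_n y/\ell_n)$), so differentiating \eqref{eq:formula} yields
\[
\partial_y g_2 = 1 + \Lambda^{-T_n}\bigl(2 + \sin(\pi N_n (x-a_n)/\ell_n)\bigr)\, \partial_y \alpha_n,
\]
with $|\partial_y \alpha_n| \leq \|\beta'\|_\infty \cdot N_n/\ell_n$, bounded on $R_n$. Combined with the construction constraint $N_n = o(\Lambda^{T_n/r}/n^4)$ and $\ell_n = n^{-4}$, the product $\Lambda^{1-T_n}\cdot \partial_y\alpha_n$ becomes negligible, and one obtains $\Lambda \partial_y g_2 = \Lambda + \mathcal{O}(\Lambda^{-T_n})$.

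I expect the main subtlety to be the step in the $(2,1)$-argument where one asserts that the intermediate iterates $f_n^k(x,y)$ for $0 < k < \tau$ miss every perturbation rectangle $R_m$, so that the linear dynamics of $f_0$ controls the growth of the $y$-coordinate. Everything else is an essentially routine differentiation using the cut-off bounds already in hand.
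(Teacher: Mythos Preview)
Your argument follows the paper's one-line proof exactly: chain rule for $f_n=f_0\circ g$, then \eqref{eq:majder} for the $(2,1)$-entry together with the relation $(f_n(x,y))_2=C^{\pm1}\Lambda^{-\tau}$, and direct differentiation of \eqref{eq:formula} for the $(2,2)$-entry.

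One step, however, does not work as written. You justify that the iterates $f^k(x,y)$ for $1\leq k<\tau$ miss every $R_m$ by asserting that after one step the $y$-coordinate is ``of order $\Lambda^{-T_n+1}$'' and that this already exceeds every half-width $\ell_m/N_m$. Neither claim holds in general: for $(x,y)\in R_n$ with $|y|$ near $\ell_n/N_n\approx n\Lambda^{-T_n/r}$ the image height is of that order, not $\Lambda^{-T_n}$; and in any case $\ell_m/N_m\approx m\Lambda^{-T_m/r}$ can be much larger than $\Lambda^{-T_n}$ when $m<n$. The correct (and simpler) reason is horizontal: every $R_m$ lies in $\{x\geq a_m>1\}$, while $(f_n(x,y))_1=K^{-1}x<1$ already after one step, so all further iterates stay to the left of every perturbation rectangle. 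With this fix your linear-growth argument gives $(f_n(x,y))_2=\Theta(\Lambda^{-\tau})$ and the $(2,1)$-bound follows from \eqref{eq:majder} exactly as in the paper.

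A minor remark on the $(2,2)$-entry: your own computation yields $\Lambda^{-T_n}\,|\partial_y\alpha_n|=\mathcal O(\Lambda^{-T_n}N_n/\ell_n)=o(\Lambda^{-(1-1/r)T_n})$, which is small but not literally $\mathcal O(\Lambda^{-T_n})$ when $r>1$; this is harmless for every later use of the lemma.
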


\begin{proof}
This follows immediately from \eqref{eq:formula} and especially the estimate \eqref{eq:majder} using $y=C^{\pm1} \Lambda^{-\tau}$.
\end{proof}

\subsection{Entropy of the horseshoes}\label{sec:entropy}

We show that $h_\top(f_n^{T_n}|R_n)\geq \log N_n$ for each $n\geq n_0$, yielding $h_\top(f_{\geq n_0})\geq \log\Lambda/r$. We assume from now on that $N_n:=[\Lambda^{T_n/r}/n^5]$. We begin by exhibiting a \emph{horseshoe} inside $R_n$. $\|\cdot\|_{\sup}$ denotes the supremum norm.

\begin{lem}
For  $j=1,\dots,N_n-1$, let $I_j:=[a_n+(j-1/4)\ell_n/N_n,a_n+(j+1/4)\ell_n/N_n]$ and let $\mathcal G_j$ be the set of graphs of functions $\phi:I_j\to \RR$ satisfying $\|\phi\|_{\sup}<\Lambda^{-T_n-1}/10$ and $\|\phi'\|_{\sup}<K^{-T_n}$. Then, for each couple $j,k=1,\dots,N_n-1$, for any $\Gamma\in\mathcal G_j$, there exists $\Gamma'\in\mathcal G_k$ such that $\tau_{i+1}^{-1}\circ f_n^{T_n}(\tau_i\circ\Gamma)\supset\Gamma'$ for some $i=0,1,2,3$.
\end{lem}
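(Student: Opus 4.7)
Because $f_n$ preserves the four-fold symmetry ($f_n\circ\tau_i=\tau_i\circ f_n$), we have $\tau_{i+1}^{-1}\circ f_n^{T_n}\circ\tau_i=\tau_3\circ f_n^{T_n}$, independent of $i$. Thus the ``for some $i$'' is automatic, and it suffices to show $\tau_1^{-1}(f_n^{T_n}(\Gamma))\supset\Gamma'$ for some $\Gamma'\in\mathcal G_k$. Since any point of $R_n$ has first iterate at $y$-level $\asymp\Lambda^{1-T_n}$, well outside every $R_m$, subsequent iterations encounter no further perturbation: $f_n^{T_n}(\Gamma)=f_0^{T_n}(g(\Gamma))$. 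I would then propagate $g(\Gamma)$ through three stages: (A) the one-step perturbation $g$, (B) linear dynamics in $C_0$, (C) the transition to $\tau_1(C_0)$ composed with $\tau_1^{-1}$.

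\textbf{Stages (A) and (B).} In (A), the position of $I_j$ inside $[a_n,b_n]$ (at distance $\geq\tfrac34\ell_n/N_n$ from the endpoints) together with $\|\phi\|_{\sup}<\Lambda^{-T_n-1}/10\ll\ell_n/N_n$ forces all three cut-offs in $\alpha_n$ to equal $1$ on $\Gamma$. Thus $g|_\Gamma$ is exactly $(x,y)\mapsto(x,y+\Lambda^{-T_n}(2+\sin s))$ with $s:=\pi N_n(x-a_n)/\ell_n\in[\pi(j-\tfrac14),\pi(j+\tfrac14)]$, a length-$\pi/2$ interval lying in a single half-period of $\sin$; so $\sin s$ is monotone with $|\cos s|\geq\tfrac{\sqrt2}{2}$. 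In (B), iterating the linear map $(x,y)\mapsto(K^{-1}x,\Lambda y)$ for $k\approx T_n$ steps yields an arc with $x$-width $\asymp K^{-k}\ell_n/N_n$ and $y$-values in $\Lambda^{k-T_n}[2-\tfrac{\sqrt2}{2},2+\tfrac{\sqrt2}{2}]$ (dominating the term $\Lambda^k\phi$), still monotone in $s$: a monotone near-vertical arc.

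\textbf{Stage (C) and verification.} Applying $f_0^\tau$ (Prop.~2.1, items 6--7) then $\tau_1^{-1}$: the transition Jacobian contracts the old $x$-direction by $K^{-\Omega(1)}$ and expands the old $y$-direction by $\Omega(\kappa)$, and $\tau_1^{-1}$ swaps their roles, producing a near-horizontal graph $y=\psi(x)$ in $C_0$-coordinates. I would then check: (i) the $x$-range of the output has length $\asymp\sqrt2\cdot\Omega(\kappa)\Lambda^{k-T_n}$ and is located near $R_n$'s $x$-position (since the quarter-cycle map carries $R_n\subset C_0$ near $\tau_1(R_n)$, which pulls back to $R_n$ under $\tau_1^{-1}$), hence covers $[a_n,b_n]\supset I_k$ for an appropriate calibration of $T_n$; (ii) the amplitude $\|\psi\|_{\sup}\lesssim K^{-k-\Omega(1)}\cdot\ell_n/(2N_n)\asymp K^{-T_n}n\Lambda^{-T_n/r}$, which is $\ll\Lambda^{-T_n-1}/10$ using $K\geq\Lambda$; (iii) the slope $|\psi'|\leq$(stable rate)$/$(unstable rate)$\ll K^{-T_n}$. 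Restricting to $x\in I_k$ then gives the required $\Gamma'\in\mathcal G_k$.

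\textbf{Main obstacle.} The most delicate step is (C): the off-diagonal entry $-|\mathcal O(K^{-\Omega(1)}(x_1+\tfrac12))|$ in the transition Jacobian could in principle couple the expanding and contracting directions and inflate $\|\psi\|_{\sup}$. This is resolved by observing that $(x_1+\tfrac12)=\mathcal O(K^{-k})$ along our arc (at the end of (B) we are exponentially close to the stable direction of the next fixed point), so the coupling contributes only $\mathcal O(K^{-k-\Omega(1)})$ to the amplitude, well below the required bound. Secondarily, confirming that the output $x$-range actually covers the specific interval $[a_n,b_n]$ (and not merely a translate of the right length) requires tracking the arc's image along the heteroclinic connection; this is where the specific placement of $R_n$ close to the unstable manifold of the $C_0$-fixed point enters.
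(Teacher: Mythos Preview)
Your approach is correct and follows the same route as the paper: one application of the perturbation $g$ (your stage A), linear iteration in $C_0$ (stage B), then the transition of Proposition~\ref{prop:factsH} into the next corner (stage C), with slope and position estimates at each stage. The paper's version is terser---it tracks $|dy/dx|$ throughout rather than separating amplitude and slope, and phrases the covering of $I_k$ by noting that the output arc is $CK^{-T_n}$-close to a macroscopic segment of the unstable manifold (so it covers every $I_k$ at once, which only simplifies your step (i))---but the substance is identical.
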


\begin{proof}
We assume $i=0$.
$\Gamma$ is the graph of some function $\phi:I_j\to\RR$. Applying $f_n$ once, the image $(x,y)\in f_n(\Gamma)$ satisfies $|y|\leq 3\Lambda^{-T_n+1}$ and
 $$\begin{aligned}
     \left|\frac{dy}{dx}\right| &\geq (K\Lambda )((\pi/\sqrt{2})\Lambda^{-T_n}N_n/\ell_n -K^{-T_n}) \\
     &\geq (K\Lambda )\Lambda^{-(1-1/r)T_n}/n.
 \end{aligned}$$
 $f^{T_n-1}$ acts linearly on this curve. $f^{T_n}(\Gamma)$ contains a piece which is at a Hausdorff distance at most $CK^{-T_n}$ to the vertical segment $\{-1/2\}\times[0,1/6]$. The slope of this piece is:
 $$\begin{aligned}
   \left|\frac{dy}{dx}\right| & \geq
      (K\Lambda)^{T_n} \Lambda^{-(1-1/r)T_n}/n
     = \Lambda^{T_n/r}K^{T_n}/n.
  \end{aligned}$$
That is, it is almost vertical
 $
    \left|\frac{dx}{dy}\right|  = o(K^{-T_n}).
 $
Applying the (non-linear) transition $f_0^\tau$, which according to Prop. \ref{prop:factsH} and Lemma \ref{item:transit}, has differential $\mat{\mathcal O(1) &  K^{\Omega(1)}x\\ 0 & |\Omega(\log K)|}$, we get a curve $CK^{-T_n}$-close to $[0,1/6]\times\{-1/2\}$ with slope:
 $
     \left|\frac{dx}{dy}\right|  \leq K^{-T_n}.
 $
Such a curve contains the image by $\tau_1$ of some graph $\Gamma'\in\mathcal G_k$, proving the claim.
\end{proof}

Iterating $m$ times the above lemma, one gets at least $(N_n-1)^m$ orbits which are $(\ell_n/N_n/2,mT_n)$-separated. Taking the limit $m\to\infty$, we see that:

\begin{cor}\label{cor:perturb2}
For any $K\geq C$ and $L\geq C$,  for any $\eps>0$, any $n_0\geq C(\eps)$, the map $f=f^{K,L}_{\geq n_0}$ is $C^r$ and satisfies
 $$
    \|f-f_0\|_{C^r}<\eps \text{ and }h_\top(f)\geq \sup_{n\geq n_0} \log(N_n-1)/T_n = \log\Lambda/r.
 $$
\end{cor}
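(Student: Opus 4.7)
The plan is to verify the two assertions separately: the $C^r$-proximity of $f$ to $f_0$, and the lower bound on topological entropy. Both parts follow with little additional work from results already established in the preceding subsection; the role of the corollary is essentially to repackage them.

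For the first assertion, I invoke the earlier lemma stating that $f_{\geq n_0}$ is $C^r$ and $\lim_{n_0\to\infty}\|f_{\geq n_0}-f_0\|_{C^r}=0$, which was established under the hypothesis $N_n=o(\Lambda^{T_n/r}/n^4)$. The present choice $N_n:=[\Lambda^{T_n/r}/n^5]$ satisfies this, so the convergence applies. Hence there is a threshold $n_0(\eps)=:C(\eps)$ beyond which $\|f-f_0\|_{C^r}<\eps$, and I simply take $n_0\ge C(\eps)$.

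For the entropy bound, I start from the assertion proved just above the corollary, namely that iterating the horseshoe lemma $m$ times produces at least $(N_n-1)^m$ orbits that are $(\ell_n/(2N_n),mT_n)$-separated in the Bowen--Dinaburg sense. This yields $r_f(\ell_n/(2N_n),mT_n)\geq (N_n-1)^m$, hence
$$
 h_\top(f,\ell_n/(2N_n))\ \geq\ \limsup_{m\to\infty}\frac1{mT_n}\log (N_n-1)^m\ =\ \frac{\log(N_n-1)}{T_n}.
$$
Since $h_\top(f)\geq h_\top(f,\eps')$ for every $\eps'>0$, this gives $h_\top(f)\geq\log(N_n-1)/T_n$ for each individual $n\geq n_0$, and thus $h_\top(f)\geq\sup_{n\geq n_0}\log(N_n-1)/T_n$.

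It remains to identify the supremum with $\log\Lambda/r$. Using $N_n=[\Lambda^{T_n/r}/n^5]$ and assuming (as is implicit in the construction, since $T_n$ is a free parameter with $N_n\to\infty$) that $T_n\to\infty$, one checks
$$
 \frac{\log(N_n-1)}{T_n}\ =\ \frac{\log\Lambda}{r}-\frac{5\log n}{T_n}+o(1/T_n),
$$
which tends to $\log\Lambda/r$ along $n\to\infty$ provided $T_n$ grows at least like $\log n$, and in any case cannot exceed $\log\Lambda/r$ since $N_n\leq \Lambda^{T_n/r}/n^5$. Hence the supremum equals $\log\Lambda/r$. The only mildly delicate point is ensuring the parameter choice $T_n$ grows fast enough that the $-5\log n/T_n$ correction vanishes in the limit, but this is a freedom already built into the construction.
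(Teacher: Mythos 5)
Your proof is correct and follows the same route as the paper: the $C^r$ part is exactly the earlier lemma (applied with $N_n=[\Lambda^{T_n/r}/n^5]=o(\Lambda^{T_n/r}/n^4)$), and the entropy bound is the standard passage from the $(N_n-1)^m$ separated orbits produced by the horseshoe lemma to $h_\top(f)\geq \log(N_n-1)/T_n$, with the supremum identified as $\log\Lambda/r$ using $T_n\gg n$. The paper compresses all of this into a one-sentence remark before the corollary, so your write-up simply supplies the same details explicitly.
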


\section{Lyapunov exponent}\label{sec:exponent}

We consider $f=f_{\geq n_0}^{K,L}$ for some parameters $L,K,n_0$ to be specified. Each nonzero vector $(x,v)\in TD$ defines a (lower) exponent:
 $$
     \lambda(x,v) := \liminf_{n\to\infty} \frac1n\log\|Df^n(x).v\|.
 $$
Observe $f(5/12,-1/2)=(2/5,-1/2)$ so that $]2/5,5/12]$ is a fundamental domain of the restriction of $f$ to $]-1/2,1/2[\times\{-1/2\}$. Let $\Delta:=\bigcup_{i=0}^3 \tau_i(]2/5,5/12]\times[-1/2,-1/10]$.

\begin{prop}\label{prop:expo}
For any $K\geq C$, any $L\geq C(K)$,any $n_0\geq C(L)$, the following holds. There is $\chi>0$ such that, for any $x\in]-1/2,1/2[^2\setminus Q$ and all $v\in\RR^2\setminus\{0\}$, 
  \begin{equation}\label{eq:boundexp}
     \lambda(x,v) < \log \Lambda/r - \chi \times \liminf_{n\to\infty} \frac1n\#\{0\leq k<n: f^kx\in\Delta\}.
 \end{equation}
\end{prop}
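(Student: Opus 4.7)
The plan is to introduce an auxiliary ``adapted'' norm $\Phi(x, v)$ on $TD$ such that $\Phi$-growth under $Df$ is bounded at rate \emph{exactly} $\log\Lambda/r$ per step in corner regions, and at a strictly slower per-step rate in transits. Averaging these two rates over $n$ steps and using that visits to $\Delta$ are confined to transit regions will produce the $-\chi\rho$ correction.

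Concretely, I would define $\Phi$ in each corner $\tau_i(C_0)$ using local coordinates $(u, w)$ where $f_0 = \operatorname{diag}(K^{-1}, \Lambda)$, via
\[
\Phi(x, v) := C_1 |v_u| + w(x)^{-(1-1/r)} |v_w|,
\]
with $C_1 := (\Lambda^{1/r} - K^{-1})^{-1}$. The constants are chosen exactly so that the \emph{corner step estimate} $\Phi(f(x), Df(x)v) \leq \Lambda^{1/r}\,\Phi(x,v)$ holds: from \eqref{eq:majder}, $|v_w'| \leq w'^{1-1/r}|v_u| + \Lambda|v_w|$, so dividing by $w'^{1-1/r} = (\Lambda w)^{1-1/r}$ gives $|v_w'|/w'^{1-1/r} \leq |v_u| + \Lambda^{1/r}|v_w|/w^{1-1/r}$; combining with $C_1|v_u'| = C_1 K^{-1}|v_u|$ yields $\Phi' \leq (C_1 K^{-1} + 1)|v_u| + \Lambda^{1/r}|v_w|/w^{1-1/r} = \Lambda^{1/r}\Phi$ by the choice of $C_1$.

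Next comes the \emph{transit step estimate}: using Prop~\ref{prop:factsH}.(6), the old expanding component $v_w^{\mathrm{old}}$ becomes the new contracting $v_u^{\mathrm{new}} = \Omega(\kappa)\,v_w^{\mathrm{old}}$, while the new expanding component picks up only $K^{-\Omega(1)}|v_u^{\mathrm{old}}| + \mathcal O(K^{-\tau_C - \Omega(1)})|v_w^{\mathrm{old}}|$. Since $w$ is bounded above at corner exit and bounded below at next-corner entry, one gets $\Phi_{\mathrm{new,entry}} \leq C_{\mathrm{tr}}\,\Phi_{\mathrm{old,exit}}$ with $C_{\mathrm{tr}}$ a constant independent of corner times. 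Per transit step this is $\log C_{\mathrm{tr}}/(c_1 L)$ growth, strictly below $\log\Lambda/r$ once $L$ is taken large enough. Combining over $n$ steps with $n_C$ corner and $n_T = n - n_C$ transit steps gives $(1/n)\log\Phi(f^nx, Df^nv) \leq \log\Lambda/r - \chi_0 (n_T/n) + o(1)$, where $\chi_0 := \log\Lambda/r - \log C_{\mathrm{tr}}/(c_1 L) > 0$. Since every visit to $\Delta$ lies in a transit and each transit contains at most some constant $\gamma$ of $\Delta$-visits, $n_T \geq (c_1 L/\gamma)\rho n$, yielding $\lambda(x,v) \leq \log\Lambda/r - \chi\rho$ with $\chi := \chi_0 c_1 L/\gamma > 0$.

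The hardest step will be making the transit estimate rigorous: $\Phi$ is only naturally defined in corners and Prop~\ref{prop:factsH}.(6) gives only the total transit differential, not intermediate per-step data. I would therefore treat each transit as a single ``meta-step,'' applying the corner estimate only to pure corner portions of the orbit and using the explicit form in Prop~\ref{prop:factsH}.(6)--(7) to bound $\Phi$ at entry of the new corner in terms of $\Phi$ at exit of the old one. A secondary technicality is comparing $\Phi$ with the Euclidean norm (needed to convert the $\Phi$-bound into a $\lambda$-bound), which is uniform on orbits staying in $]{-1/2},1/2[^2 \setminus Q$ because then $w$ stays in a fixed compact positive range thanks to Prop~\ref{prop:factsH}.(4); strict inequality in the final bound follows from the strict positivity of the slack in either step estimate on any nonzero vector.
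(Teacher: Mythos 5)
Your overall architecture (a weighted norm $\Phi = C_1|v_u| + w^{-(1-1/r)}|v_w|$ growing at rate $\Lambda^{1/r}$ inside corners, plus a bounded loss over each transit whose length $\Omega(L)$ then yields the per-visit gain $\chi$) is a reasonable repackaging of the paper's angle-based argument: the condition that $w^{-(1-1/r)}|v_w|$ dominates $|v_u|$ is essentially the paper's dichotomy between special and non-special times, and your corner step estimate is correct. But the transit estimate --- which you yourself flag as the hardest step --- is false as stated, and your justification for it rests on a wrong fact. You claim that $w$ is ``bounded below at next-corner entry''; in fact Proposition~\ref{prop:factsH}\eqref{item:transit1} gives $w^{\mathrm{new}}=\tilde\alpha(\cdot)\,u^{\mathrm{exit}}=K^{-\Omega(1)}u^{\mathrm{exit}}$ with $u^{\mathrm{exit}}\asymp K^{-\tau'_n}$, so the entry height is exponentially small in the length $\tau'_n$ of the segment just completed (this is forced by item~\eqref{item:filter}: orbits accumulate on $\partial[-1/2,1/2]^2$, so corner entry heights shrink and corner times grow, cf.\ Lemma~\ref{lem:growth-seg}). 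Consequently the weight $w^{-(1-1/r)}$ jumps from $O(1)$ at old-corner exit to $K^{(1-1/r)\tau'_n+\Omega(1)}$ at new-corner entry. Since the transit rotates the old contracting direction into the new expanding one ($v_w^{\mathrm{new}}=K^{-\Omega(1)}v_u^{\mathrm{old}}+\cdots$), a vector that is horizontal at old exit gives $\Phi_{\mathrm{new,entry}}/\Phi_{\mathrm{old,exit}}\asymp K^{(1-1/r)\tau'_n}$, which is unbounded along the orbit. No constant $C_{\mathrm{tr}}$ exists, and the per-step accounting that produces your $\chi_0$ collapses.

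The gap is repairable, but the repair is exactly what Section~\ref{sec:exponent} of the paper is about: the vectors on which the transit loses $K^{(1-1/r)\tau'_n}$ are precisely those whose horizontal component was not negligible at old exit, hence which gained only $K^{-\tau'_n}$ (far less than the budgeted $\Lambda^{\tau'_n/r}$) during the preceding corner passage. Combining corner and transit into a single entry-to-entry estimate, these two effects cancel and one does obtain $\Phi_{\mathrm{new,entry}}\le K^{O(1)}\Lambda^{\tau'_n/r}\,\Phi_{\mathrm{old,entry}}$, after which $L\ge C(K,A)$ converts the factor $K^{O(1)}\Lambda^{-\Omega(L)/r}$ into $1/A$. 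This compensation is precisely what the paper isolates via the special/normal block dichotomy (Definition~\ref{def:specialtime}, Lemmas~\ref{lem:exposeg1} and~\ref{lem:exposeg3}, Claim~\ref{claim:expo}); it cannot be delegated to a ``meta-step'' with a uniform constant, and it is the heart of the proof rather than a technicality. A minor further point: your comparison of $\Phi$ with the Euclidean norm also invokes ``$w$ stays in a fixed compact positive range,'' false for the same reason; fortunately only the inequality $\Phi\ge c|v|$ is needed along the orbit, and that one is uniform since the weight only blows up as $w\to0$.
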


\begin{rem}\label{rem:C1}
For $r=1$, \eqref{eq:boundexp} follows immediately from $\Lip(f|\Delta)<\Lambda$.
\end{rem}

\subsection{Division of the orbit}

For $x\in]-1/2,1/2|^2\setminus Q$ and $v\in\RR^2\setminus\{0\}$, we are going to show \eqref{eq:boundexp} with $\chi=\log A$ for some $A>1$ and "large" parameters $K,L,n_0$ (how large will be specified).

Recall from Proposition \ref{prop:factsH} that  $C:=\bigcup_{i=0}^3 \tau_i(C_0)$ is the union of the four corners of $[-1/2,1/2]^2$ where $f_0$ is affine. We call \emph{affine segments} the maximal intervals of integers $k\in\ZZ$ such that $f^k(x)$ belongs to $C$. There are infinitely many of them in $\NN$. By replacing $(x,v)$ with some iterate (which does not change $\lambda(x,v)$), we assume that $x\in C$, $f^{-1}(x)\notin C$. 

We denote the affine segments by $S_n:=[t_{n-1},s_n]$, for $n=1,2,\dots$ ($t_0=0$ by the assumption).  Observe that $t_{n-1}$ is the unique $t\in S_n$ for which $f^t(x)$ may belong to the support $R_{p_n}$ of some perturbation (for some $p_n\geq n_0$).  

We also set $\tau_n:=t_n-t_{n-1}$, $\tau_n':=s_n-t_{n-1}$ and $d_n:=t_n-s_n$,  $x(t):=f^t(x)$, $v(t):=(f^t)'(x).v$, $x_n:=f^{t_n}(x)$ and $v_n:=(f^{t_n})'(x).v$.
Let $0\leq \alpha(t)\leq\infty$ be the absolute value of the tangent of the angle between $v(t)$ and $\tau_i'(e_1)$ (if $x_n\in\tau_i(C_0)$) and let $\theta_n:=\alpha(t_n)$ and $\tilde\theta_n:=1/\alpha(s_n)$.

For $t\in[t_n,t_{n+1}[$
 such that $x(t_n)\in \tau_i(C_0)$, we use coordinates:
 $$\begin{aligned}((x(t))_1,(x(t))_2):=\tau_i^{-1}(x(t))+(1/2,1/2)\\
 \text{ and }((v(t))_1,(v(t))_2):=T_{x(t)}\tau_i^{-1}.(v(t)).
 \end{aligned}$$

The lengths of the affine segments cannot grow too fast. More precisely, 

\begin{lem}\label{lem:growth-seg}
Assume that $L\geq C$. Then, for every $n\geq1$, 
 $$
    K^{-\tau'_n}\Lambda^{\tau'_{n+1}}\leq K^{\Omega(1)} \text{ and }
    K^{-\tau_n}\Lambda^{\tau_{n+1}} \leq  K^{-\Omega(L)}\Lambda^{\Omega(L)}.
 $$
\end{lem}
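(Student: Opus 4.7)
The plan is to track, at each entry time $t_{n-1}$, the stable and unstable offsets $(\xi_n,\eta_n)$ of the orbit in local coordinates adapted to the fixed point of the corner visited during $S_n$, and then derive a relation between consecutive $\tau'_n$ and $\tau'_{n+1}$ yielding the claimed bounds.

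First, by item~(\ref{item:corner}) the corner dynamics is linear: $(\xi,\eta)\mapsto(K^{-1}\xi,\Lambda\eta)$. The step at $t_{n-1}$ may be replaced by $f_{p_n}$, but Lemma~\ref{lem:perturb0} says this merely adds $\mathcal{O}(\Lambda^{-T_n})$ to $\eta$, which is negligible compared to the order-$1$ value $\eta(s_n)$. Since exit occurs precisely when $\Lambda\eta$ first exceeds $\sim 1/12$, we obtain $\eta_n=\Omega(\Lambda^{-\tau'_n})$ and $\xi(s_n)=K^{-\tau'_n}\xi_n$. One further step of $f_0$ places the orbit in $f_0(C_0)\setminus C_0$, and item~(\ref{item:transit1}) supplies the rest of the transition: after conjugating by $\tau_1$, which swaps the stable and unstable directions, the new offsets at $t_n$ read
\begin{equation*}
\eta_{n+1}=\tilde\alpha(\cdot)\,K^{-1}\,\xi(s_n)=\tilde\alpha\cdot K^{-\tau'_n-1}\,\xi_n,\qquad \xi_{n+1}=\bigl|\tfrac12-\tilde h(\cdot)\bigr|,
\end{equation*}
with $\tilde\alpha=K^{-\Omega(1)}$ bounded between two positive powers of $K^{-1}$.

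Applying the exit condition $\Lambda^{\tau'_{n+1}}\eta_{n+1}=\Omega(1)$ for the next segment, the first display rearranges to
\begin{equation*}
K^{-\tau'_n}\Lambda^{\tau'_{n+1}}=\frac{\Omega(1)\,K}{\tilde\alpha\,\xi_n},
\end{equation*}
which is $K^{\Omega(1)}$ provided $\xi_n$ is bounded below by a universal constant $c>0$; this is the geometric content of the construction of $f_0$ (the heteroclinic orbits cross the stable manifolds of the fixed points transversely, so $\tilde h$ stays uniformly away from $1/2$). The initial case $n=1$ is handled by first iterating $x$ through one more cycle, so that $\xi_1$ too arises from a transit. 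This yields~(a).

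For (b), I decompose $\tau_n=\tau'_n+d_n$ and $\tau_{n+1}=\tau'_{n+1}+d_{n+1}$. By item~(\ref{item:transit}), $d_n,d_{n+1}=c_1L\pm c_2=\Omega(L)$, so
\begin{equation*}
K^{-\tau_n}\Lambda^{\tau_{n+1}}=\bigl(K^{-\tau'_n}\Lambda^{\tau'_{n+1}}\bigr)\cdot K^{-d_n}\Lambda^{d_{n+1}}\le K^{\Omega(1)}\cdot K^{-\Omega(L)}\Lambda^{\Omega(L)}=K^{-\Omega(L)}\Lambda^{\Omega(L)}
\end{equation*}
once $L$ is large enough to absorb the $K^{\Omega(1)}$. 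The main obstacle I anticipate is this uniform lower bound on $\xi_n$: it is not written down explicitly in items~(\ref{item:transit})--(\ref{item:transit2}) and has to be extracted from the transition built in Appendix~\ref{app:construct}, where the image of the transit map should land in a compact subset of the interior of the entry face of the next corner.
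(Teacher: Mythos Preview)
Your approach is essentially the paper's own: track the stable coordinate through one affine segment, pass it through the transition (item~(\ref{item:transit1})) to become the new unstable coordinate, and compare with the exit condition of the next segment; then deduce the second bound from $\tau_m=\tau'_m+d_m$ with $d_m=\Omega(L)$. The ``obstacle'' you flag---a uniform lower bound on $\xi_n$---is precisely what the paper uses as well, stated there simply as $(x_{n-1})_1\geq C^{-1}$ without further comment; so you have correctly isolated the one geometric input not made explicit in Proposition~\ref{prop:factsH}. One small caveat: your two-sided exit relation $\Lambda^{\tau'_{n+1}}\eta_{n+1}=\Omega(1)$ can fail on the lower side when the perturbation at $t_n$ dominates $\eta_{n+1}$, but only the inequality $\Lambda^{\tau'_{n+1}}\eta_{n+1}\leq C$ is needed (and this direction holds because the perturbation only pushes upward), so the argument goes through.
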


\begin{proof}
Using Proposition \ref{prop:factsH} and $ (x_{n-1})_1\geq C^{-1}$,
 $$\begin{aligned}
   (x_n)_2 &\geq K^{-\Omega(1)} (x(s_n))_1 \geq K^{-\tau_n'-\Omega(1)}(x(t_{n-1}+1))_1 \\
    &\geq  K^{-\tau_n'-\Omega(1)} (x_{n-1})_1 \geq K^{-\tau_n'-\Omega(1)} .
 \end{aligned}$$
As the perturbation only pushes "upward", $(x(t_n+1))_2\geq \Lambda (x_n)_2$ and
 $$\begin{aligned}
  C \geq  (x(s_{n+1}))_2& \geq C^{-1} \Lambda^{\tau_{n+1}'}(x_n)_2 \geq \Lambda^{\tau'_{n+1}}K^{-\tau_n'-\Omega(1)}\\
 \end{aligned}$$
The claim follows as $\tau_m=\tau'_m+d_m$ and $d_{m}=\Omega(L)$ for all $m$.
\end{proof}

\begin{lem}\label{lem:order}
For $n_0\geq C$ and $L\geq C$, the lengths of the affine segments are eventually large:
 $$
    \liminf_{n\to\infty} \tau_n \geq T_{n_0}/r.
 $$
\end{lem}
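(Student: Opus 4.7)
The plan is to bound $\tau'_n$ from below using the smallness of the unstable coordinate $y(t_{n-1})$ at corner entry, then conclude $\tau_n = \tau'_n + d_n \geq T_{n_0}/r$ via the transition bound $d_n = c_1 L - c_2 = \Omega(L)$ from item \ref{item:transit} of Proposition \ref{prop:factsH}.

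The key preliminary observation is that in each affine segment $S_n$ only the entry iterate $f^{t_{n-1}}(x)$ can meet a perturbation support. Indeed, after one application of $f = f_0 \circ g$, the first coordinate is bounded by $K^{-1} \cdot 2 < 1$, which lies outside the $x$-range $[1+1/p^2, 1+1/p^2+1/p^4]$ of every $R_p$. Hence for $1 \leq k \leq \tau'_n$ the iterates $f^{t_{n-1}+k}(x)$ evolve under the pure affine $f_0|_{C_0}$, so $|y(t_{n-1}+k)| = \Lambda^{k-1} |y(t_{n-1}+1)|$, and $\tau'_n$ equals, up to $\mathcal O(1)$, $\log(1/|y(t_{n-1}+1)|)/\log\Lambda$.

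I then split into two cases. In Case~(a), $f^{t_{n-1}}(x) \in R_{p_n}$ for some $p_n \geq n_0$; by construction $|y(t_{n-1})| \leq \ell_{p_n}/N_{p_n} = p_n \Lambda^{-T_{p_n}/r}$, and the perturbed step adds at most $3\Lambda^{-T_{p_n}}$ (subdominant for $r \geq 1$) before multiplication by $\Lambda$, so $|y(t_{n-1}+1)| \leq 2 p_n \Lambda^{1-T_{p_n}/r}$, giving
$$
  \tau'_n \;\geq\; T_{p_n}/r - \log(p_n)/\log\Lambda - \mathcal O(1) \;\geq\; T_{n_0}/r - \mathcal O(\log p_n).
$$
In Case~(b), the entry lies outside all $R_p$ with $p \geq n_0$, so $f = f_0$ at this step. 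I invoke item \ref{item:filter} of Proposition \ref{prop:factsH}: every $f_0$-orbit of a point in $]-1/2,1/2[^2 \setminus Q$ accumulates on $\partial[-1/2,1/2]^2$, along which the unstable coordinate of each corner vanishes. A continuity/shadowing argument---exploiting that between consecutive Case~(a) events the $f$-orbit follows $f_0$, and that the cumulative $C^0$-effect of perturbations is controlled by $\mathcal O(\Lambda^{-T_{n_0}})$---transfers this accumulation to the $f$-orbit along the Case~(b) subsequence, forcing $|y(t_{n-1})| \to 0$ and hence $\tau'_n \geq T_{n_0}/r$ eventually.

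Combining the two cases with $d_n = \Omega(L)$, and choosing the parameters $T_n$ to grow fast enough (e.g.\ linearly, $T_n = n$) so that $T_{p_n}/r$ dominates $\log p_n$ as $p_n \to \infty$, the $\mathcal O(\log p_n)$ correction is absorbed---whether $p_n$ ranges over a bounded set (absorbed by $\Omega(L)$, using $n_0 \geq C(L)$) or diverges (absorbed by the gap in $T_{p_n}/r - T_{n_0}/r$). Hence $\tau_n \geq T_{n_0}/r$ for all but finitely many $n$. The main technical obstacle is Case~(b): rigorously propagating the heteroclinic-accumulation property of $f_0$ to the $f$-orbit under the infinitely many small perturbations received over an infinite time horizon.
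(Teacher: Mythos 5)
Your Case~(a) is essentially the paper's first case (entry point in some $R_p$ forces $(x_n)_2\lesssim p\Lambda^{-T_p/r}$, hence an affine segment of length about $T_p/r$), and your preliminary remark that only the entry iterate of an affine segment can lie in a perturbation support is correct and is also used in the paper. The problem is Case~(b), where your argument has a genuine gap and, more importantly, aims at the wrong mechanism. You try to deduce $|y(t_{n-1})|\to 0$ along the unperturbed-entry subsequence by transferring the $f_0$-accumulation on $\partial[-1/2,1/2]^2$ (item~(4) of Proposition~2.1) to the $f$-orbit via a ``continuity/shadowing argument'' whose cumulative $C^0$ error you claim is $\mathcal O(\Lambda^{-T_{n_0}})$. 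That claim cannot hold over an infinite time horizon: each perturbation of size $\Lambda^{-T_n}$ is amplified by the factor $\Lambda$ per step and reaches size $\mathcal O(1)$ after $T_n$ iterates --- this amplification is precisely what creates the horseshoes, so $f$-orbits do \emph{not} shadow $f_0$-orbits and need not converge to the boundary. Moreover the target statement is false in general: an orbit that alternately enters $R_p$ (for one fixed $p$) and then has an unperturbed entry will have $|y(t_{n-1})|\asymp K^{-T_p/r}$ along the Case-(b) subsequence, bounded away from $0$. You also leave the argument unexecuted, flagging it yourself as ``the main technical obstacle.''

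The paper's Case~(b) is quantitative and local, not asymptotic. If the entry $x_n$ of a segment is unperturbed, then its unstable coordinate is controlled by the stable contraction accumulated over the \emph{previous} segment: $(x(s_{n}))_1=K^{-\tau'_{n}}(x_{n-1})_1$, and the transition map (items~(6)--(7) of Proposition~2.1, eq.~(2.1)) sends this small stable coordinate to the unstable coordinate of the next corner, so $(x_n)_2\leq (x(s_n))_1\lesssim K^{-\tau'_n}$ and hence $\tau'_{n+1}\geq \eta\,\tau'_n$ with $\eta=\log K/\log\Lambda>1$. Combining with Case~(a) gives $\tau'_{n+1}\geq\min(\eta\,\tau'_n,\,T_{n_0}/r)$ for every $n$, and since $\eta>1$ this recursion forces $\tau'_n\geq T_{n_0}/r$ for all large $n$; no limiting behaviour of the orbit is needed. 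I would also note that the free choice ``$T_n=n$'' is not yours to make at this stage (the $T_n$ are constrained by $N_n=[\Lambda^{T_n/r}/n^5]$ and by the geometry of the return), though this is minor compared with the Case~(b) issue.
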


\begin{proof}
If $x_n\in R_p$ for some $p\geq n_0$, then $(x_n)_2\leq N_p/\ell_p\leq C p^{-1}\Lambda^{-T_p/r}$ and therefore $\tau'_n\geq T_p/r+\log p-C\geq T_{n_0}/r$ ($n_0$ is large).

If $x_n\notin\bigcup_{p\geq n_0}R_p$, then $(x(s_{n+1}))_1= K^{-\tau'_n}(x_{n})_1$ and, using eq.~\eqref{eq:transit1} of Prop.~\ref{prop:factsH}: $(x_{n+1})_2\leq (x(s_{n+1}))_ 1$. Thus, $\tau'_{n+1}\geq \eta\tau'_n$.

In both cases, $\tau'_{n+1}\geq\min(\eta\tau'_n,T_{n_0}/r)$ and therefore $\tau_n\geq \tau'_n\geq T_{n_0}/r$ for all large $n$.
\end{proof}

\begin{lem}\label{lem:angletransition}
We have:
 $$
   \theta_n = K^{-\Omega(1)}\left(\tilde\theta_n+\mathcal O(1)(x(s_n))_1\right)
                 = K^{-\Omega(1)}\left(\tilde\theta_n + \mathcal O(1) \Lambda^{-\tau_{n+1}}\right).
 $$
\end{lem}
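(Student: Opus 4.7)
The plan is to compute $\theta_n$ directly by applying the transition differential of Proposition~\ref{prop:factsH}(\ref{item:transit1}) and then carefully converting between the intrinsic coordinate frames of two consecutive corners. By the four-fold symmetry I may assume $x(s_n)\in C_0$, so the source intrinsic frame agrees with standard plane coordinates shifted by $(1/2,1/2)$. The target corner $\tau_1(C_0)$ uses a rotated intrinsic frame whose ``horizontal'' direction $\tau_1'(e_1)$ is actually vertical in the plane; composing the transition differential with $d\tau_1^{-1}$ therefore exchanges rows and swaps the roles of the old contracting and expanding directions. Geometrically, the old expanding (unstable) direction is mapped onto the new contracting (stable) direction, which is what places the $\Omega(\kappa)$ entry on the new intrinsic horizontal component.

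Writing $v(s_n)=(a,b)$ in source intrinsic coordinates (so $\tilde\theta_n=|a|/|b|$) and applying $d\tau_1^{-1}\circ T_{x(s_n)}f_0^\tau$, I get $v(t_n)=\bigl(-\Omega(\kappa)\,b,\;K^{-\Omega(1)}\,a+\mathcal O(K^{-\Omega(1)}(x(s_n))_1)\,b\bigr)$ in target intrinsic coordinates. Since $\theta_n$ is the ratio of the intrinsic vertical to the intrinsic horizontal component of $v(t_n)$, a one-line triangle-inequality estimate, absorbing the bounded factor $1/\Omega(\kappa)$ into the $K^{-\Omega(1)}$ prefactor, yields the first equality $\theta_n=K^{-\Omega(1)}(\tilde\theta_n+\mathcal O(1)(x(s_n))_1)$.

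For the second equality I replace $(x(s_n))_1$ by a bound in terms of $\Lambda^{-\tau_{n+1}}$. During the upcoming affine segment $[t_n,s_{n+1}]$ the expanding intrinsic coordinate is multiplied by $\Lambda^{\tau'_{n+1}}$ to reach a value of order $1$, so $(x(t_n))_2=\mathcal O(1)\Lambda^{-\tau'_{n+1}}$. The transition formula of Proposition~\ref{prop:factsH}(\ref{item:transit1}) identifies $(x(t_n))_2$ (after the $\tau_1$ rotation) with $\tilde\alpha((x(s_n))_2)\cdot(x(s_n))_1$, where $\tilde\alpha=K^{-\Omega(1)}$; solving gives $(x(s_n))_1=\mathcal O(1)K^{\Omega(1)}\Lambda^{-\tau'_{n+1}}$. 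Combined with Lemma~\ref{lem:growth-seg}, which relates $\tau'_{n+1}$ to $\tau_{n+1}=\tau'_{n+1}+d_{n+1}$, and using the freedom to choose the (different) $\Omega(1)$ exponents with enough slack so that the leading $K^{-\Omega(1)}$ prefactor absorbs the inversion of $\tilde\alpha$, one recovers $K^{-\Omega(1)}(\tilde\theta_n+\mathcal O(1)\Lambda^{-\tau_{n+1}})$.

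The main obstacle is not the calculation itself but the bookkeeping of the rotation $d\tau_1^{-1}$: the fact that the transition swaps contracting and expanding directions between consecutive corners is what decides which matrix entry multiplies $a$ and which multiplies $b$, and thus where the dominant $\Omega(\kappa)\,b$ term and the small $K^{-\Omega(1)}\,a$ term land in $v(t_n)$. Once this is set up correctly, both equalities fall out of Proposition~\ref{prop:factsH}(\ref{item:transit1}) and Lemma~\ref{lem:growth-seg} with no new idea.
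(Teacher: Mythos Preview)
Your proof is essentially the same as the paper's. Both arguments compute $\theta_n$ by applying the transition differential of Proposition~\ref{prop:factsH}(\ref{item:transit1}), converting to the intrinsic frame of the target corner (you do this by left-multiplying with $d\tau_1^{-1}$, the paper does it by writing out the component formulas $(v_n)_1=\tilde h'(x(s_n)_2)\,v(s_n)_2$ and $(v_n)_2=\tilde\alpha\,v(s_n)_1+\tilde\alpha'\,x(s_n)_1\,v(s_n)_2$ directly), and then taking the ratio; for the second equality both you and the paper simply invoke that $(x(s_n))_1$ is controlled by $\Lambda^{-\tau_{n+1}}$ up to $K^{\Omega(1)}$ and $\Lambda^{\Omega(L)}$ factors that are absorbed into the prefactor.

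One small point worth making explicit: to factor the off-diagonal entry as the \emph{same} $K^{-\Omega(1)}$ times $\mathcal O(1)(x(s_n))_1$ (rather than a potentially different $K^{-\Omega(1)}$), you need the relation $\tilde\alpha'=\mathcal O(\tilde\alpha)$, which the paper cites explicitly from Section~\ref{sec:proof-H}. The matrix form in Proposition~\ref{prop:factsH}(\ref{item:transit1}) alone, read literally with independent $\Omega(1)$'s, does not quite give this; your sentence ``absorbing the bounded factor $1/\Omega(\kappa)$ into the $K^{-\Omega(1)}$ prefactor'' glosses over that step. It is a bookkeeping issue only, not a gap in the argument.
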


Observe that if the transition map was a translation, one would simply have $\tilde\theta_n=\theta_n$.

\begin{proof}
This follows from the transition map. According to Proposition \ref{prop:factsH}, we have:
 $$\begin{aligned}
    (x_n)_1 &= C - \tilde h(x(s_n)_2) \text{ and } (x_n)_2 = \tilde\alpha(x(s_n)_2)x(s_n)_1\\
    (v_n)_1 &=\tilde h'(x(s_n)_2)v(s_n)_2 \text{ and }
              (v_n)_2 = \tilde\alpha(x(s_n)_2)v(s_n)_1 +\\
                &\qquad\qquad\qquad\qquad\qquad\qquad\qquad\qquad + \tilde\alpha'(x(s_n)_2)x(s_n)_1v(s_n)_2.
 \end{aligned}$$
Thus, using $\alpha'(x)=\mathcal O(\alpha(x))$ (see Sec. \ref{sec:proof-H}),
 $$\begin{aligned}
   \theta_n &= \frac{(v_n)_2}{(v_n)_1} = \frac{\tilde\alpha(x(s_n)_2)v(s_n)_1 + \tilde\alpha'(x(s_n)_2)x(s_n)_1v(s_n)_2}{\tilde h'(x(s_n)_2)v(s_n)_2} \\
   &= \frac{\tilde\alpha(x(s_n)_2)}{\tilde h'(x(s_n)_2)}\left(\tilde\theta_n + \mathcal O(1) x(s_n)_1\right)\\
   &= \frac{K^{-\Omega(1)}}{\log K} \left(\tilde\theta_n + \mathcal O(1) (x(s_n))_1\right)
 \end{aligned}$$
The claims follow.
\end{proof}

\subsection{Expansion during one affine segment}

We bound the expansion for different types of affine segments, distinguished according to the value of the tangent $\theta_n$ of the angle between $v_n$ and $e_1$.

\begin{defn}\label{def:specialtime}
A time $t_n$ is said to be \emph{special} if $\theta_n>\Lambda^{-(1-1/r)\tau_{n+1}}$.
\end{defn}

Remark that this definition could have been made in terms of $\tilde\theta_n$, the angle at the end of the affine segment just before $t_n$. Indeed, $\tilde\theta_n>K^{-\Omega(1)}\Lambda^{-(1-1/r)\tau_{n+1}}$ if $t_n$ is special and $\tilde\theta_n<K^{\Omega(1)}\Lambda^{-(1-1/r)\tau_{n+1}}$ otherwise.

\begin{lem}\label{lem:exposeg1}
For any $A<\infty$, for any $L\geq C(K,A)$, and $n_0\geq C(L)$, the following estimate holds. If $t_{n-1}$ is not special, then
 $$
    |v_n| \leq  \Lambda^{\tau_n/r} |v_{n-1}|/A.
 $$
($|\cdot|$ is the Euclidean norm).
\end{lem}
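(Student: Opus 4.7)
I would split the interval $[t_{n-1},t_n]$ into three pieces: the initial step at $x_{n-1}\in C_0$ (which, by the remark following the definition of $t_{n-1}$, is the only step of the segment that may lie in a perturbation support $R_{p_n}$), the $\tau_n'-1$ remaining affine iterates inside $C_0$, and the transition $f_0^{d_n}$ controlled by Proposition~\ref{prop:factsH}. Working in the local affine coordinates that identify $C_0$ with $[0,2]^2$, I would track the two components of $v(t)$ through each piece and apply the non-speciality hypothesis $\theta_{n-1}\leq \Lambda^{-(1-1/r)\tau_n}$ exactly at the first step.

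Normalize $v_{n-1}=(h,w)$ with $h>0$, so that $|w|\leq \Lambda^{-(1-1/r)\tau_n'}\, h$ since $\tau_n\geq\tau_n'$. Lemma~\ref{lem:perturb0} (whose conclusion also covers the unperturbed case, since the off-diagonal entry vanishes there) yields $(v(t_{n-1}+1))_1 = K^{-1} h$ and $|(v(t_{n-1}+1))_2|\leq C\,\Lambda^{-(1-1/r)\tau_n'}\, h$, with the $\Lambda w$ contribution from the diagonal absorbed into the perturbation off-diagonal $\mathcal O(\Lambda^{-(1-1/r)\tau_n'})$ precisely thanks to the non-special bound on $w$. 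The remaining $\tau_n'-1$ iterations act by $A=\operatorname{diag}(K^{-1},\Lambda)$, producing $|(v(s_n))_1|=K^{-\tau_n'} h$ and $|(v(s_n))_2|\leq C\,\Lambda^{\tau_n'/r-1}\, h$. For $n_0$ large enough that $\tau_n'$ is large (Lemma~\ref{lem:order}), the horizontal term is negligible and I conclude $|v(s_n)|\leq C_1\,\Lambda^{\tau_n'/r}\, |v_{n-1}|$ with $C_1$ a universal constant.

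For the transition, Proposition~\ref{prop:factsH} tells us that $T_{x(s_n)}f_0^{d_n}$ has entries bounded by some $C_2=C_2(K)$ independent of $L$, whence $|v_n|\leq C_1 C_2\,\Lambda^{\tau_n'/r}\, |v_{n-1}|$. Writing $\tau_n=\tau_n'+d_n$ with $d_n\geq c_1 L - c_2$ (Proposition~\ref{prop:factsH}), this becomes $|v_n|\leq C_1 C_2\,\Lambda^{-d_n/r}\,\Lambda^{\tau_n/r}\,|v_{n-1}|$, and the claim follows by choosing $L\geq C(K,A)$ so that $\Lambda^{(c_1 L-c_2)/r}\geq A\, C_1 C_2$.

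The delicate point is that Lemma~\ref{lem:perturb0}'s off-diagonal rate $\Lambda^{-(1-1/r)\tau}$ becomes, after $\tau_n'$ affine iterations, exactly the limiting rate $\Lambda^{\tau_n'/r}$; the non-speciality hypothesis is precisely what prevents the pre-existing vertical component $w$ from producing a larger rate at the first step. The extra factor $1/A$ is then bought from the transition by taking $L$ large: the transition expansion is bounded independently of $L$, while $d_n=\Omega(L)$.
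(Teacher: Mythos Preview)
Your proof is correct and follows essentially the same approach as the paper's: decompose $[t_{n-1},t_n]$ into the first step (governed by Lemma~\ref{lem:perturb0}), the remaining $\tau_n'-1$ purely affine iterates, and the transition of length $d_n$; use non-speciality to bound the initial vertical component so that after the affine block one obtains $|v(s_n)|\leq C\Lambda^{\tau_n'/r}|v_{n-1}|$; then absorb the $K$-dependent but $L$-independent transition bound $\mathcal O(\log K)$ into the factor $\Lambda^{-d_n/r}=\Lambda^{-\Omega(L)/r}$ by taking $L\geq C(K,A)$. The paper writes the same computation slightly more compactly (bounding $|v(s_n)|$ directly as a sum of the two components rather than normalizing $v_{n-1}=(h,w)$), but the structure and the key observation---that the off-diagonal perturbation rate $\Lambda^{-(1-1/r)\tau_n'}$ exactly matches the non-speciality threshold, so both contributions to the vertical component grow at the limiting rate $\Lambda^{\tau_n'/r}$---are identical.
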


\begin{proof}
Using $f(x_1,x_2)=(K^{-1}x_1,\Lambda x_2)$ in the affine segment, except possibly for $t=t_n$ where we use Lemma \ref{lem:perturb0}, 
 $$\begin{aligned}
    |v(s_n)|
               &\leq C K^{-(\tau'_n-1)}(v(t_{n-1}+1))_1
               +C\Lambda^{\tau'_n-1}(v(t_{n-1}+1))_2\\
             &\leq CK^{-\tau'_n}|v_{n-1}|+C\Lambda^{\tau'_n}
        (\theta_{n-1}+C\Lambda^{-(1-1/r)\tau_n})|v_{n-1}|\\
 \end{aligned}$$
Hence, as $\theta_{n-1}\leq \Lambda^{-(1-1/r)\tau_n}$ and $\tau_n=\tau'_n+\Omega(L)$,
 $$\begin{aligned}
    |v(s_n)|              &\leq (1+2C\Lambda^{\tau_n-\Omega(L)}\Lambda^{-(1-1/r)\tau_n})|v_{n-1}|\\
             &\leq (1+C\Lambda^{-\Omega(L)} \Lambda^{\tau_n/r})|v_{n-1}|\\
             &\leq \Lambda^{-\Omega(L)} \Lambda^{\tau_n/r} |v_{n-1}|
 \end{aligned}$$
($\tau_n/r-\Omega(L)>0$ as $n\geq C(L)$). Thus,
 $$
    |v_n| \leq \mathcal O(\log K)|v(s_n)| \leq \mathcal O(\log K)\Lambda^{-|\Omega(L)|} \Lambda^{\tau_n/r} |v_{n-1}|.
 $$
 To conclude, observe that $\mathcal O(\log K)\Lambda^{-|\Omega(L)|}<1/A$ for $L\geq C(K,A)$.
\end{proof}

\begin{lem}\label{lem:exposeg3}
For any $n_0\geq C$, $K\geq C$, $L\geq C$,  if $t_{n-1}$ is special,
 $$
   |v_{n}| \leq K^{-\Omega(1)}\Lambda^{\mathcal O(L)} |v_{n-2}|
 $$
\end{lem}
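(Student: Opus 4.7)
The plan is to exploit the fact that the special condition $\theta_{n-1} > \Lambda^{-(1-1/r)\tau_n}$ forces $v_{n-2}$ to be extremely horizontal, which in turn forces $v_{n-1}$ to be much smaller than $v_{n-2}$; this contraction over segment $n-1$ must then absorb the vertical expansion over segment $n$.

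First, I apply Lemma~\ref{lem:angletransition} to turn the special condition into a lower bound $\tilde\theta_{n-1} \geq K^{\Omega(1)}\Lambda^{-(1-1/r)\tau_n}$; the $\mathcal O(1)\Lambda^{-\tau_n}$ correction in Lemma~\ref{lem:angletransition} is negligible since for $r>1$ one has $\Lambda^{-\tau_n} < \Lambda^{-(1-1/r)\tau_n}$. Since $\tilde\theta_{n-1} = |v(s_{n-1})_1/v(s_{n-1})_2|$ measures horizontal dominance at time $s_{n-1}$, and since the diagonal affine action of segment $n-1$ multiplies $(v_{n-2})_1$ by $K^{-\tau'_{n-1}}$ and $(v_{n-2})_2$ by $\Lambda^{\tau'_{n-1}}$, pulling the bound on $\tilde\theta_{n-1}$ back through this segment gives an upper bound on the ratio $|(v_{n-2})_2|/|(v_{n-2})_1|$; in particular $|v_{n-2}| \approx |(v_{n-2})_1|$.

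Next, I compute $v_{n-1}$ using the transition formulas of Proposition~\ref{prop:factsH}\ref{item:transit1}. Both components are small: $(v_{n-1})_1 = \tilde h'(x(s_{n-1})_2)\Lambda^{\tau'_{n-1}}(v_{n-2})_2$ is controlled by the previous ratio bound, while $(v_{n-1})_2$ is of order $K^{-\Omega(1)}K^{-\tau'_{n-1}}|(v_{n-2})_1|$ plus an equally small contribution from the $\tilde\alpha'\,x(s_{n-1})_1\,\Lambda^{\tau'_{n-1}}(v_{n-2})_2$ term. I then propagate through segment $n$ via Lemma~\ref{lem:perturb0} (one perturbed step), $\tau'_n-1$ affine steps, and a second transition of Proposition~\ref{prop:factsH}\ref{item:transit1}, obtaining $|v_n| \leq \Lambda^{\mathcal O(L)}\left(\Lambda^{\tau'_n}|(v_{n-1})_2| + \Lambda^{\tau'_n/r}|(v_{n-1})_1|\right)$.

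Combining these estimates, the only potentially dangerous product is of the form $\Lambda^{\mathcal O(L)}\cdot\Lambda^{\tau'_n}K^{-\tau'_{n-1}}\cdot K^{-\Omega(1)}|v_{n-2}|$, which Lemma~\ref{lem:growth-seg} bounds by $\Lambda^{\mathcal O(L)}K^{\Omega(1)}\cdot K^{-\Omega(1)}|v_{n-2}|$. The main obstacle is therefore the accounting: one must verify that the two $K^{-\Omega(1)}$ contractions introduced by the transitions of Proposition~\ref{prop:factsH}\ref{item:transit1} (once at $s_{n-1}\to t_{n-1}$ and once at $s_n\to t_n$) strictly dominate the single $K^{\Omega(1)}$ cost paid to Lemma~\ref{lem:growth-seg}. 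Carrying this out with the explicit constants furnished by the construction of $f_0$ yields the desired bound $|v_n| \leq K^{-\Omega(1)}\Lambda^{\mathcal O(L)}|v_{n-2}|$.
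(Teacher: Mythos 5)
Your skeleton is the paper's: convert the special condition into a bound on $\tilde\theta_{n-1}$ via Lemma~\ref{lem:angletransition}, deduce that $v$ essentially did not grow over segment $n-1$, propagate through segment $n$, and close with Lemma~\ref{lem:growth-seg}. But the step you defer as ``accounting'' is precisely where your argument would fail. First, there are not two $K^{-\Omega(1)}$ contractions acting on the dominant term: the second transition sends the dominant (vertical) component to $(v_n)_1=\tilde h'(x(s_n)_2)\,v(s_n)_2$ with $\tilde h'=\Omega(\log K)$ --- an expansion, not a contraction; the factor $\tilde\alpha=K^{-\Omega(1)}$ only multiplies the already negligible horizontal component $v(s_n)_1=\mathcal O(K^{-\tau'_n})$. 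Second, the one genuine contraction, $\tilde\alpha(x(s_{n-1})_2)$ in $(v_{n-1})_2=\tilde\alpha\,v(s_{n-1})_1+\cdots$, is up to bounded factors the very quantity whose reciprocal produces the $K^{\Omega(1)}$ of Lemma~\ref{lem:growth-seg} (that lemma rests on $(x_{n-1})_2=\tilde\alpha(x(s_{n-1})_2)(x(s_{n-1}))_1$). These two powers of $K$ cancel identically; no choice of ``explicit constants from the construction of $f_0$'' will make one strictly dominate the other, and what survives is of order $K^{\mathcal O(1)}$, not $K^{-\Omega(1)}$.

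The lemma nonetheless holds because the bound $K^{-\Omega(1)}\Lambda^{\mathcal O(L)}$ is not a contraction: with $L\geq C(K)$ the factor $\Lambda^{\mathcal O(L)}$ absorbs any fixed power of $K$, and all that matters is a bound independent of $n$ and of the segment lengths, which Claim~\ref{claim:expo} then beats by $\Lambda^{(a_k-a_{k+1})/r}/A^2$ using $a_k-a_{k+1}\geq 2T_{n_0}/r$ and $n_0\geq C(L)$. The paper obtains such a bound by writing $|v_{n-1}|\leq \mathcal O(\log K)\,|v(s_{n-1})_1|/\tilde\theta_{n-1}$ and $|v(s_n)|\leq \Lambda^{\tau'_n}\theta_{n-1}|v_{n-1}|$, so that only the ratio $\theta_{n-1}/\tilde\theta_{n-1}$ of Lemma~\ref{lem:angletransition} enters, times $\Lambda^{\tau'_n}K^{-\tau'_{n-1}}\leq K^{\Omega(1)}$. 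A further omission: $(v_{n-1})_1=\tilde h'\,v(s_{n-1})_2$ also contains the off-diagonal contribution $\mathcal O(\Lambda^{\tau'_{n-1}/r})(v_{n-2})_1$ of the perturbed step (Lemma~\ref{lem:perturb0}); pushed through segment $n$ it is a priori of size $\Lambda^{(\tau'_{n-1}+\tau'_n)/r}|v_{n-2}|$, growing with the block, and must be killed by the specialness lower bound on $\tilde\theta_{n-1}$, not merely by your ratio bound on $(v_{n-2})_2/(v_{n-2})_1$.
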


\begin{proof}
Observe that
 $$
   (x(s_{n-1}))_1\leq K^{\Omega(1)}(x_{n-1})_2\leq K^{\Omega(1)}\Lambda^{-\tau_n}(x_n)_2
        \leq K^{\Omega(1)}\Lambda^{-\tau_n} .
 $$
Using Lemma \ref{lem:angletransition}  :
 $$\begin{aligned}
   \frac{\tilde\theta_{n-1}}{\theta_{n-1}}
         &\geq K^{-\Omega(1)}-\mathcal O(1)\frac{(x(s_{n-1}))_1}{\theta_{n-1}} 
           \geq K^{-\Omega(1)}-\mathcal O\left(\frac{K^{-\tau_n}}{\Lambda^{-(1-1/r)\tau_n}}\right) \\
         & \geq   K^{-\Omega(1)}-\mathcal O\left(\left(K^{-1}\Lambda^{1-1/r}\right)^{\tau_n}\right)
           \geq K^{-\Omega(1)},
 \end{aligned}$$
as $t_{n-1}$ is special and $\tau_n\geq T_{n_0}/r$ is large.
We compute:
 $$\begin{aligned}
   |v(s_{n})| \leq \Lambda^{\tau'_{n}}\theta_{n-1} |v_{n-1}| 
      \leq \mathcal O(\log K) \Lambda^{\tau_{n}'}\theta_{n-1} |v(s_{n-1})|
 \end{aligned}$$
and
 $$
   |v(s_{n-1})| \leq C \frac{(v(s_{n-1}))_1}{\tilde\theta_{n-1}} = C \frac{K^{-\tau'_{n-1}+1}}{\tilde\theta_{n-1}} (v(t_{n-2}+1))_1.
 $$
Therefore, using $(v(t_{n-2}+1))_1\leq K^{-1}|v_{n-2}|$ and Lemma \ref{lem:growth-seg},
 $$\begin{aligned}
  |v_{n}| & \leq \mathcal O(\log K) |v(s_{n})| 
     \leq  \mathcal O(\log^2 K) \Lambda^{\tau_{n}'}\theta_{n-1} \frac{K^{-\tau'_{n-1}}}{\tilde\theta_{n-1}} |v_{n-2}| \\
      &\leq  \mathcal O(\log^2 K)  \frac{\theta_{n-1}}{\tilde\theta_{n-1}} \Lambda^{\tau_{n}'}K^{-\tau'_{n-1}} |v_{n-2}| \\
     &\leq  \mathcal O(\log^2 K)  K^{-\Omega(1)} \Lambda^{\Omega(L)} |v_{n-2}|.
 \end{aligned}$$
\end{proof}

\subsection{Proof of Proposition \ref{prop:expo}}

We set $A:=e$. We pick $K\geq C$, $L\geq C(K,A)$ and $n_0\geq C(L)$ as required by the constructions in  Corollary \ref{cor:perturb2} and Lemmas \ref{lem:exposeg1} and \ref{lem:exposeg3}.  If necessary we increase the lower bound on $n_0$ to ensure
 $$
    \Lambda^{T_{n_0}/r}>A^2K^{-\Omega(1)}\Lambda^{\mathcal O(L)}.
 $$
Lemma \ref{lem:order} gives $N_1=N_1(x,v)<\infty$ such that $t_{n+1}-t_n\geq T_{n_0}/r$ for all $n\geq N_1$. Let $N\geq N_1$.

We shall use the following regroupings of affine segments for $(x,v)$ chosen as in the statement of the Proposition.

\begin{defn}
 A \emph{special block} is an integer interval $[t_{n-2},t_n[\subset[t_{N_1},t_N[$ such that $t_{n-1}$ is special. A \emph{normal block} is a non-empty integer interval $[t_{n-1},t_n[\subset[t_{N_1},t_N[$ such that $t_{n-1}$ is not special.
\end{defn}

Proposition \ref{prop:expo} will be a consequence of the following claim, if one observes that $N$ is equal to the number of visits to $\Delta$ before time $t_N$.

\begin{cla}\label{claim:expo}
There exists a partition of $[0,t_N[$ or $[t_1,t_N[$ into a disjoint union of special blocks and normal blocks. Moreover, there exists $C(x,v)<\infty$, independent of $N$, such that
 \begin{equation}\label{eq:vN-bound}
     |v_N| \leq C(x,v)\Lambda^{t_N/r}/A^N.
 \end{equation}
\end{cla}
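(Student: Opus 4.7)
The plan is to construct the partition greedily by scanning backward from $t_N$, apply Lemmas \ref{lem:exposeg1} and \ref{lem:exposeg3} on each block, and multiply telescopically. Starting with position $m:=N$, at each stage examine $t_{m-1}$: if it is special, add the special block $[t_{m-2},t_m[$ and replace $m$ by $m-2$; otherwise add the normal block $[t_{m-1},t_m[$ and replace $m$ by $m-1$. Iterate until $m$ drops to $0$ or $1$, yielding either a partition of $[0,t_N[$ or of $[t_1,t_N[$ (the latter occurs precisely when the greedy algorithm would next require a special block reaching past $t_0$). By construction consecutive blocks abut and are pairwise disjoint. Once we are past the initial $N_1$ indices the conditions of Lemmas \ref{lem:exposeg1}, \ref{lem:exposeg3}, and \ref{lem:order} all apply; the finitely many low-index segments contribute only a bounded factor depending on $(x,v)$, absorbed into $C(x,v)$, together with the a priori bound $|v_{t_1}|\le \Lip(f)^{t_1}|v|$ if the partition starts at $t_1$ rather than $0$.

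For a normal block $[t_{n-1},t_n[$, Lemma \ref{lem:exposeg1} gives
\[
  \frac{|v_n|}{|v_{n-1}|}\le\frac{\Lambda^{\tau_n/r}}{A}=\frac{\Lambda^{(t_n-t_{n-1})/r}}{A^1}.
\]
For a special block $[t_{n-2},t_n[$, Lemma \ref{lem:exposeg3} gives $|v_n|/|v_{n-2}|\le K^{-\Omega(1)}\Lambda^{\mathcal{O}(L)}$. Combining $\tau_n\ge T_{n_0}/r$ (Lemma \ref{lem:order}) with the standing inequality $\Lambda^{T_{n_0}/r}>A^2K^{-\Omega(1)}\Lambda^{\mathcal{O}(L)}$, this ratio is at most $\Lambda^{\tau_n/r}/A^2$, which is itself at most $\Lambda^{(\tau_{n-1}+\tau_n)/r}/A^2=\Lambda^{(t_n-t_{n-2})/r}/A^2$. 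The key bookkeeping point is that a special block covers two affine segments and loses exactly $A^2$.

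Multiplying these per-block estimates telescopically, the sum of block lengths equals $t_N$ (modulo the absorbed prefix) and the sum of per-block $A$-exponents equals the total number of affine segments, namely $N$. Hence $|v_N|\le C(x,v)\,\Lambda^{t_N/r}/A^N$, which is \eqref{eq:vN-bound}. The main obstacle is the $A^2$-versus-$A^1$ bookkeeping for special blocks: if a special time $t_{n-1}$ were instead split between two consecutive normal blocks, two affine segments would produce only one factor of $A$ and the overall bound would be too weak by a factor of $A^{\#\mathrm{special\ blocks}}$. The backward-greedy rule ensures every special $t_{n-1}$ is absorbed as the interior point of a special block, and the lower bound $\tau_n\ge T_{n_0}/r$ with $T_{n_0}$ large enough is precisely what makes the $A^2$ loss of a special block affordable given its length.
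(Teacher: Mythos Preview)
Your proof is correct and follows the same approach as the paper's: a backward-greedy construction of the block partition, per-block application of Lemmas~\ref{lem:exposeg1} and~\ref{lem:exposeg3}, and a telescoping product, with the finitely many initial (pre-$N_1$) segments absorbed into $C(x,v)$. The only cosmetic difference is that the paper's greedy halts at index $N_1$ or $N_1+1$ (since its definitions of ``normal'' and ``special'' blocks require containment in $[t_{N_1},t_N[$) and absorbs $|v(a_I)|$ directly, whereas you run the greedy down to $m\in\{0,1\}$ and then absorb the low-index contribution; the two are equivalent.
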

 
\begin{proof}[Proof of the claim]
To define the partition into blocks, let $a_1:=t_N$ and, inductively:
 $$
    a_{k+1}:=\min\{t_j:j\geq t_{N_1}\text{ s.t. }[t_j,a_k[\text{ is a normal or special block}\}.
 $$
We set $I\geq1$ minimum such that $a_{I+1}$ is not defined. For $k=1,\dots,I$,  $j_k$ denote the unique integer such that $a_k=t_{j_k}$. Let us check:
 \begin{equation}\label{subclaim}
   j:=j_I \text{ is $N_1$ or $N_1+1$.}
 \end{equation}
Assume by contradiction $j\geq N_1+2$. If $t_{j-1}$ is special then $[t_{j-2},t_j[\subset[t_{N_1},t_N[$ is a special block and one could set $a_{I+1}=t_{j-2}$. Otherwise $[t_{j-1},t_j[\subset[t_{N_1},t_N[$ is a normal block and one could set $a_{I+1}=t_{j-1}$. In both cases, this contradicts the definition of $j=j_I$, proving \eqref{subclaim}.

Lemma \ref{lem:exposeg1} states that if $[a_{k+1},a_k[$ is a normal block, then $|v(a_k)|\leq (\Lambda^{(a_k-a_{k+1})/r}/A) |v(a_{k+1}|$ and $j_{k}=j_{k+1}+1$. Lemma  \ref{lem:exposeg3} shows that if $[a_{k+1},a_k[$ is a special block then $|v(a_k)|\leq \Lambda^{\mathcal O(L)}K^{-\Omega(1)} |v(a_{k+1})| \leq (\Lambda^{(a_k-a_{k+1})/r}/A^2) |v(a_{k+1})|$ as $a_k-a_{k+1}\geq 2T_{n_0}/r$ is large, thanks to $n_0\geq n_0(K)$. Also $j_k=j_{k+1}+2$ in this case.

We get:
 $$\begin{aligned}
     |v_N| &\leq C(x,v) \prod_{i=1}^I \frac{\Lambda^{(a_{i+1}-a_i)/r}}{A^{j_{i+1}-j_i}}
        \times \frac{\Lambda^{a_I/r}}{A^I}
       \leq C(x,v) \Lambda^{t_N/r}/A^N
 \end{aligned}$$
with $C(x,v):=|v_{a(I)}|\Lambda^{-a_I/r}A^I$. \eqref{eq:vN-bound} is established.
\end{proof}

\section{Proof of the Main Results}\label{sec:proofThm}

\begin{proof}[Proof of Theorem \ref{mythm1}]
Let $\eps>0$. We fix the parameters $K\geq C$, $L\geq C(K)$, $n_0\geq C(L,\eps)$ as in Corollary \ref{cor:perturb2} and Proposition \ref{prop:expo} 
and set $f:=f^{K,L}_{\geq n_0}$, $f_0:=f^{K,L}$. By the choice of $n_0$, we have $\|f-f_0\|_{C^r}<\eps$.

Let $\mu\in\Pq(f)$.  Observe that $\mu$ cannot satisfy $\mu(]-1/2,1/2[^2\setminus Q)=0$ as it would be an invariant measure for $f_0$ and therefore be quasi-periodic according to item \eqref{item:cycle} of Proposition \ref{prop:factsH}. By ergodicity and the invariance from item \eqref{item:filter} of Proposition \ref{prop:factsH}, $\mu(]-1/2,1/2[^2\setminus Q)=1$. Thus, $\mu$-a.e. orbit visits the set $\Delta$ defined before Proposition \ref{prop:expo},  implying $\mu(\Delta)>0$.

According to Proposition \ref{prop:expo} (and the Birkhoff theorem applied to the characteristic function of $\Delta$), $\lambda(x,v)<\log\Lambda/r$ for $\mu$-a.e. $x$ and all nonzero $v\in\RR^2$. In particular, the two Lyapunov exponents of $\mu$ are themselves strictly less than $\log\Lambda/r$. Using Ruelle's inequality, $h(f,\mu)\leq\lambda(f,\mu)^+<\log\Lambda/r$. The variational principle implies
 $$
     h_\top(f)=\sup_{\mu\in\Pq(f)} h(f,\mu) \leq \log\Lambda/r,
 $$
as quasi-periodic measures have zero entropy. Comparing with Corollary \ref{cor:perturb2}, we get: $h_\top(f)= \log\Lambda/r$.

The same variational principle gives the inequality $\lambda^u(\Pq(f))\geq h_\top(f)$ so $\lambda^u(\Pq(f))=\log\Lambda/r$. The theorem is proved.
\end{proof}

\begin{proof}[Proof of Corollary \ref{mythm2}]
Let $M$ be any manifold of dimension $d\geq2$. Let $K:=D\times\TT^{d-2}$ (recall $D$ is a closed disk in $\RR^2$) and $F(x,t)=(f(x),t)$. Observe that $\partial K=\partial D\times\TT^{d-2}$, so $F$ is a $C^r$ diffeomorphism coinciding with the identity near the boundary of $K$. $M$ contains a $d$-dimensional ball and therefore a diffeomorphic copy of $K$. Define  a $C^r$ diffeomorphism $G$ by setting $G|M\setminus K\equiv Id$ and $G|K=F$. $h_\top(G)=h_\top(F)>0$ according to the variational principle for ergodic measures.

Observe that any ergodic measure with maximal entropy for $G$ must be carried by $K$. But then its projection by $K\to D$ is a measure for $f$ with the same entropy, hence of maximal entropy for $f$, a contradiction: $G$ has no measure of maximal entropy.
\end{proof}

\begin{proof}[Proof of Remark \ref{rem:bigexpo}]
Observe that $\lambda(f)=\inf_{n\geq1}\log\Lip(f^n)$, hence $f\mapsto \lambda(f)$ is upper semicontinuous: $\limsup_{f\to f_0} \lambda(f)\leq \lambda(f_0)$. Thus,
 $$
     \limsup_{f\to^{C^r} f_0} h_\top(f)\leq \lambda(f_0)/r
 $$
is a corollary of Yomdin's theory \cite{Yomdin1987} using $h_\top(f_0)=0$. The reverse inequality follows from Theorem \ref{mythm1}. The first claim in Remark \ref{rem:bigexpo}: $ \limsup_{f\to^{C^r} f_0} h_\top(f)= \lambda(f_0)/r$ is proved.

For the second claim, observe that  Lyapunov exponents are bounded by the logarithm of the Lipschitz constant, hence the following direction of the inequality is obvious: $$\limsup_{f\to^{C^1} f_0} \lambda^u(\Pq(f))\leq\lambda(f_0).$$ 
To conclude, we exhibit $C^\infty$ perturbations $f$ of $f_0$ with $\lambda^u(\Pq(f))\geq \lambda:=\lambda(f_0)$. Let us sketch their construction.

To build such diffeomorphisms $f$,  we make a perturbation around $[a_n,b_n]\times\{0\}$ as in Sect. \ref{sec:perturb} (recall $a_n=1+1/n^2$, $b_n=a_n+1/n^4$). We need to remain $\Lambda^{-T_n}$-close to the axis to get a large period $T_n$. To obtain a large exponent however, we need a relatively large slope. We get it by using transverse intersections of the (perturbed) unstable manifold with the (intact) stable manifold. We replace the map $g$ of \eqref{eq:formula} from Sect. \ref{sec:perturb}, by
 $$\begin{aligned}
    \bar g(x,y) &= (x,y+\phi_n(x,y))\\ &\text{ where } \phi_n(x,y):=\bar \alpha_n(x,y) e^{-(\log n)^2} \cos(10 \pi n^4 x)
 \end{aligned}$$
with the cut-off $\bar\alpha_n(x,y)=\alpha(10n^4(x-a_n))\alpha(10n^4(b_n-x))\beta(n^4y)$ (using $\alpha$ and $\beta$ from Sect. \ref{sec:perturb}). We thus obtain maps $\bar f_n$ and $\bar f_{\geq n}$.

Observe that $\|f_0-\bar f_n\|_{C^r}\leq (C n)^{4r}e^{-(\log n)^2}$ for any fixed, finite $r$. It follows that $\bar f_{\geq n}$ are $C^\infty$ maps   and $\lim_{n\to\infty} \|f_0-\bar f_{\geq n}\|_{C^r}=0$.

Consider a small neighborhood $U_n$ of the graph of $x\mapsto \phi_n(x,0)$ restricted to the values $x\in[a_n,b_n]$ such that $\phi_n(x,0)\in[(5/12)\Lambda^{-T_n},\Lambda^{-T_n}]$. The absolute value of the slope there is at least $n^4e^{-(\log n)^2}\Lambda^{-T_n}$. The first return time to $U_n$ is $T_n\pm C$. The intersection of $U_n$ with $f^{T_n\pm C}$ is easily seen to be hyperbolic. We thus obtain a horseshoe as the maximal invariant set for $U_n$ under $f^{T_n\pm C}$.

It is routine to check that the Lyapunov exponent of any ergodic invariant measure supported by this horseshoe is at least
 $$
  \frac1{T_n+C}(T_n\lambda -(\log n)^2+4\log n-C)\to \lambda,
 $$
as $T_n\geq n$ (indeed, $T_n>>n$). Thus $\lambda^u(\Pq(f_{\geq n}))=\lambda$ for any $n$.
\end{proof}

\begin{proof}[Proof of Corollary \ref{coro2}]
Let $d\geq2$. Theorem \ref{mythm1} gives a $C^r$ diffeomorphism $f:\TT^2\to\TT^2$ with no maximal measure. Let $\phi:\TT^{d-2}\to\TT^{d-2}$ be a hyperbolic automorphism. Set $\Lambda^u:=\min\{\Lambda\in sp(\phi): |\Lambda|>1\}$ and $\Lambda^s:=\max\{\Lambda\in sp(\phi): |\Lambda|<1\}$. Let $N\geq1$ be a large enough integer such that $(\Lambda^u)^N>\Lip(f)$ and $(\Lambda^s)^N<\Lip(f^{-1})^{-1}$. Define $F:\TT^4\to\TT^4$ by $F(x,y)=(\phi^N(x),f(y))$. It is easy to check that $F$ is partially hyperbolic.

If there was a measure $\mu$ with maximal entropy for $F=\phi^N\times f$, its projections $\pi_1\mu$, $\pi_2\mu$ would satisfy $h(F,\mu)\leq h(\phi^N,\pi_1\mu)+h(f,\pi_2\mu)$. As $h_\top(F)=h_\top(\phi^N)+h_\top(f)$, the projections would have to be measure with maximal entropy for $\phi^N$ and $f$. Hence $F$ like $f$ has no measure of maximal entropy.
\end{proof}

\appendix

\section{Non-differentiable examples}\label{app:nondiff}

\subsection{Lipschitz examples}
Recall that a bi-Lipschitz transformation $h:X\to X$ of a metric space $X$ is a bijection which, together with its inverse, is Lipschitz. The bi-Lipschitz constant is $bi-Lip(h):=Lip(h)+Lip(h^{-1})$ where $Lip(h):=\sup_{x\ne y} d(h(x),h(y))/d(x,y)$.Things are much easier in this category:

\begin{thm}[Folklore]
On any compact surface $M$, there is:
 \begin{itemize}
  \item a bi-Lipschitz transformation $h_0:M\to M$ with no measure of maximal entropy;
  \item a bi-Lipschitz transformation $h_\infty:M\to M$ with (countably) infinitely many ergodic measures of maximal entropy.
 \end{itemize}
\end{thm}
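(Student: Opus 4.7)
Every compact surface $M$ contains an embedded closed disk $D$; it suffices to build bi-Lipschitz self-maps of $D$ that are the identity in a neighbourhood of $\partial D$, since extending by the identity on $M\setminus D$ preserves both the bi-Lipschitz property and the entropy. I therefore work on the square $Q=[0,1]^2$ and arrange the interesting dynamics on a pairwise disjoint family of invariant sub-boxes.

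\textbf{A bi-Lipschitz horseshoe building block.} The key ingredient is, for each $h$ in an interval $(0,h^*]$, a bi-Lipschitz self-map $\Phi_h:Q\to Q$, equal to the identity near $\partial Q$, with $h_\top(\Phi_h)=h$ and a unique ergodic measure of maximal entropy. One realizes an irreducible $0$--$1$ subshift of finite type of Perron eigenvalue $e^h$ as a uniformly hyperbolic basic set of affine ``Smale horseshoe'' type inside the interior of $Q$, then interpolates bi-Lipschitzly to the identity in an annular collar. Choosing the transition matrix to have maximum row sum bounded by some fixed integer $M$---which covers every $h\in(0,\log M)$ since logarithms of Perron numbers are dense in $[0,\log M]$---bounds the one-step expansion and contraction rates, and hence $\Lip(\Phi_h)+\Lip(\Phi_h^{-1})$, by a constant independent of $h$. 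Uniqueness of the maximal measure is the classical Parry measure of the SFT.

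\textbf{Gluing and the two examples.} Fix pairwise disjoint squares $Q_n\subset Q$, $n\ge 1$, with $\diam(Q_n)\to 0$, and let $\widetilde\Phi_{h,n}$ denote the copy of $\Phi_h$ affinely rescaled onto $Q_n$ (rescaling preserves bi-Lipschitz constants). Set $h^*=\log 2$. For the map $h_0$, choose $h_n\in(0,h^*)$ strictly increasing to $h^*$, all realizable as in the previous paragraph; for the map $h_\infty$, take $h_n\equiv h^*$. In both cases, set the map equal to $\widetilde\Phi_{h_n,n}$ on $Q_n$ and equal to the identity elsewhere. Since $\widetilde\Phi_{h_n,n}$ is the identity near $\partial Q_n$, these are continuous self-maps of $Q$ equal to the identity near $\partial Q$. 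A routine estimate using the identity-in-the-collar property shows that the global bi-Lipschitz constants are bounded (up to an additive constant) by the uniform bound on $\Lip(\Phi_h)+\Lip(\Phi_h^{-1})$ from the previous paragraph.

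\textbf{Entropy, ergodic decomposition, and the main obstacle.} Each $Q_n$ and the complement $Q_*:=Q\setminus\bigsqcup_n Q_n$ are invariant, so every ergodic invariant measure is supported in exactly one of them. On $Q_*$ the map is the identity and any invariant measure has zero entropy; on $Q_n$ the entropy is at most $h_n$. Therefore $h_\top(h_0)=h_\top(h_\infty)=\sup_n h_n=h^*$. For $h_0$ no measure attains $h^*$ (since $h_n<h^*$ strictly), so no measure of maximal entropy exists. For $h_\infty$ each $Q_n$ carries a unique ergodic measure of maximal entropy, namely the Parry measure of the corresponding SFT; these countably many measures are pairwise distinct because their supports are disjoint, yielding the desired infinite family. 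The only genuine technical point is the construction of the building block $\Phi_h$ with a uniform bi-Lipschitz bound as $h$ varies---this is precisely what makes the Lipschitz category easier than the $C^r$ category of the main text---and once this block is in hand the disjoint invariant decomposition trivializes the rest of the argument.
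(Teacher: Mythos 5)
Your proposal is correct and follows essentially the same strategy as the paper: rescaled copies of a uniformly bi-Lipschitz ``building block'' placed on a shrinking family of pairwise disjoint invariant disks, identity elsewhere, with the ergodic decomposition localizing every ergodic measure to a single piece (entropies $h_n\nearrow h^*$ for non-existence, $h_n\equiv h^*$ for infinite multiplicity). The only quibble is the aside that every $h\in(0,\log M)$ is realized by an SFT --- density of logarithms of Perron numbers gives only a dense set of realizable entropies, but that is all your argument actually uses.
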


\begin{proof}
We first give construction of $h_0$. There is a family of bi-lipschitz transformations $T_n$ of the unit disk $D$ satisfying:
 \begin{itemize}
  \item $T_n$ is the identity outside of the disk of radius $1/2$;
  \item $bi-Lip(T_n)\leq 10$;
  \item $h_\top(T_1)<h_\top(T_2)<\dots<1$.
 \end{itemize}
Observe also that, for any bi-Lipschitz transformation $f:D\to D$ and any number $\rho>0$, $f_\rho$ the self-map of $D_\rho:=\{x\in\RR^2:|x|<r\}$ defined by $f_\rho(x):=\rho f(x/\rho)$, satisfies $bi-Lip(f_\rho)=bi-Lip(f)$.

Let $\chi:U\to V$, $U\subset \RR^2, V\subset M$,  be some bi-Lipschitz chart of $M$. Let $D_1,D_2,\dots$ be an infinite sequence of pairwise disjoint disks included in $U$, with $D_i=B(x_i,\rho_i)$. The example $f:M\to M$ is defined as follows:
 \begin{itemize}
  \item $f(x)=x$ for $x\in M':=M\setminus \bigcup_{i\geq1} \Delta_i$ with $\Delta_i:=\chi(D_i)\subset M$;
  \item $f(x) =\chi_i\circ T_i\circ\chi_i^{-1}(x)$ if $x\in\Delta_i$, setting $\chi_i:D\to\Delta_i$, $u\mapsto x_i+\rho_i u$.
 \end{itemize}

Observe that each disk $\Delta_i$ is invariant. Hence any ergodic invariant probability measure satisfies exactly one of the following condition: $\mu(\Delta_i)=1$ for some $i\geq1$ or $\mu(M')=1$. In the latter case, $\mu$ is the Dirac measure at some point and $h(f,\mu)=0$. In the second case, $h(f,\mu)\leq h_\top(T_i)$. The variational principle implies that $h_\top(f)=\sup_{i\geq1} h_\top(T_i)$. We see that $h(f,\mu)<h_\top(f)$ by the same reasoning: $f$ has no maximal entropy measure.

To see that $f:M\to M$ is bi-Lipschitz, it is enough to prove it for $F:V\to V$, $F=\chi^{-1}\circ f\circ\chi$, with the canonical Euclidean metric on $V\subset \RR^2$. Let $x,y\in V$, $x\ne y$. We distinguish four cases:

\medbreak\noindent
\emph{Case 1:} $x,y\notin \bigcup_{i\geq1} B(x_i,\rho_i/2)$. Then $fx=f^{-1}x=x$ and $fy=f^{-1}y=y$ and $d(fx,fy)=d(x,y)$.
 
\medbreak\noindent
\emph{Case 2:} $x\in B(x_i,\rho_i/2)$ and $y\in B(x_j,\rho_j/2)$ for some $i\ne j$. We compute $d(fx,fy)\leq \rho_i+\rho_j+d(x,y)$ and $d(x,y)\geq(\rho_i+\rho_j)/2$ so:
 $
    d(fx,fy)/d(x,y)\leq 4 + 1 \leq 5.
 $

\medbreak\noindent
\emph{Case 3:} $x\in B(x_i,\rho_i/2)$ and $y\in B(x_i,\rho_i)$ for some $i\geq1$. Use $bi-Lip(F|B(x_i,\rho_i))=bi-Lip(T_i)\leq 10$.

\medbreak\noindent
\emph{Case 4:} $x\in B(x_i,\rho_i/2)$ and  $y\notin\bigcup_{j\geq1} B(x_j,\rho_j/2)\cup B(x_i,\rho_i)$. We compute $d(fx,fy)\leq \rho_i+d(x,y)$ and $d(x,y)\geq\rho_i/2$ so $d(fx,fy)\leq 5d(x,y)$.

\medbreak

The same reasoning applies to $\Lip(f^{-1})$.

This concludes the construction of $h_0:=f$ with no measure of maximal entropy. To build a bi-Lipschitz transformation $h_\infty$ with infinitely many ergodic invariant probability measures with maximal entropy, one may repeat the previous construction but using a sequence of maps such that $h_\top(T_n)>0$ is constant, instead of strictly increasing (for instance by using the same map infinitely many times).
\end{proof}

\subsection{Homeomorphisms} There exist topologically minimal homeomorphisms of the sphere $S^2$ with non-zero topological entropy and either no measure of maximal entropy, or infinitely many of them, both countably  and uncountably many. This follows from a result of B\'eguin, Crovisier and Le Roux \cite{begin2007construction} and a choice of constructions of subshifts.

Indeed, according to  \cite{begin2007construction}, given any bi-measurable transformation $T$ of a Cantor set and an irrational rotation $R:S^1\to S^1$, there exists a homeomorphism $h$ on the sphere $S^2$ and a bi-measurable conjugacy of restrictions of $h$ and $T\times R$ to subsets of full measure with respect to any invariant probability measure. Well-known examples among subshifts of systems with non-zero topological entropy and infinitely many ergodic, invariant probability measures with maximal entropy (both countably and uncountably many) can therefore be translated on the sphere.

\section{Construction of the homoclinic map $f_0$}\label{app:construct}

We build a $C^\infty$ diffeomorphism $H:=f_0$ of the $2$-disk $D$ satisfying the properties stated in Proposition \ref{prop:factsH} depending on two large parameters $K$ and $L$ specifying the contraction and the time delay respectively.

We set $\Lambda:=6/5$, $\lambda:=\log\Lambda$, $\kappa:=\log K$ and let $\psi:\RR\to[0,1]$ be a $C^\infty$ {bump function}: $\psi(t)=1$ iff $t\leq0$, $\psi(t)=0$ iff $t\geq1$,  and $\psi'(t)\leq0$ for all $t\in\RR$. Also $0<\psi(t)<1$ and $\psi'(t)<0$ for $0<t<1$.

\begin{figure}
\includegraphics[width=8cm]{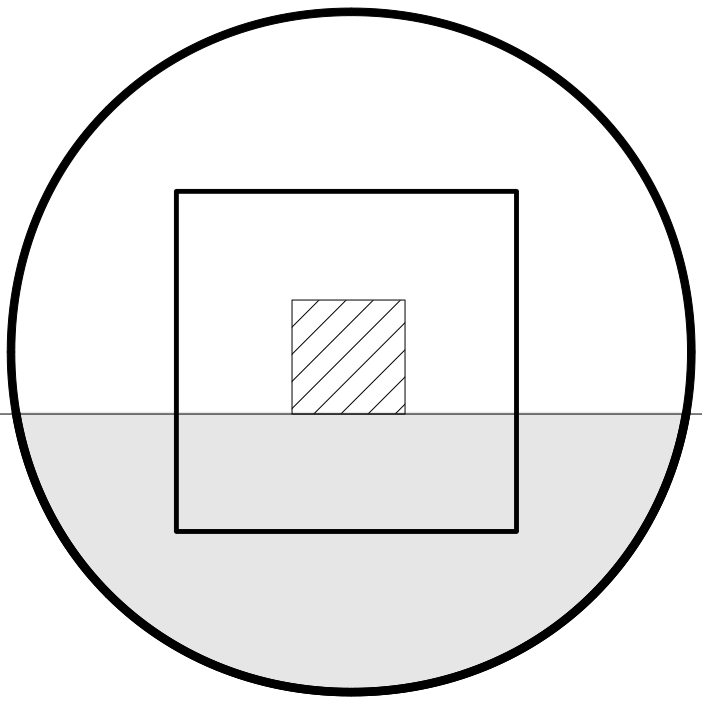}
\caption{The disk is $D:=\overline{B(0,2)}$. The large square is $[-1/2,1/2]^2$, the small, hatched square is $[-1/6,1/6]^2$, and the grey area is $D\cap\{(x,y):y\leq-1/6\}$ the domain of $F_0$.}\label{fig:domaines1.jpg}
\end{figure}

\subsection{Vector field}\label{sec:V0}
We build the homoclinic map $H$ outside $Q:=[-1/6,1/6]^2$ as the time $1$ of a vector field $V:D\setminus Q_0\to \RR^2$ with $Q_0:=[-1/10,1/10]^2$.

\subsubsection*{$V_0$ on $D\cap\{y\leq -1/10\}$}
We first define a vector field $V_0$ on $[-1/2,1/2]\times[-1/2,-1/10]$:
 \begin{equation}\label{eq:V0}
   V_0(x,y) = \alpha_L(x,y) (X_0(x),A_0(x)y)
\end{equation}
with $\alpha_L(x,y)$ a factor restricting the support to $D$ and slowing down the field on $\{\frac1{11}\leq x\leq\frac1{10}\}$ and:
 $$
    X_0(x)=\small \left\{\begin{array}{ll}
                 -\kappa(x+\frac12) & (x\leq \frac1{13})\\
                 \tiny -\psi(156x-12)\kappa(x+\frac12)+\\
                 \qquad - (1-\psi(156x-12))\lambda(\frac12-x)\small & (\frac1{13}\leq x\leq\frac1{12})\\
                 -\lambda(\frac12-x) & (x\geq\frac1{12})
    \end{array}\right.
 $$
and
 $$
    A_0(x)=\small \left\{\begin{array}{ll}
                 \lambda & (x\leq-\frac1{11})\\
                 \tiny\psi(132x+12)\lambda+\\
                 \qquad -(1-\psi(132x+12))\kappa\small & (-\frac1{11}\leq x\leq-\frac1{12})\\
                 -\kappa & (x\geq-\frac1{12}).
    \end{array}\right.
 $$
Finally,
$$
   \alpha_L(x,y)=(L^{-1}+(1-L^{-1})\psi(|440x-42|-1))\psi\left(\frac27(x^2+y^2)-\frac17\right)
 $$
We define $H_0:]-1/2,1/2[\times]-1/2,-1/6[\to]-1/2,1/2[\times]-1/2,-1/10[$ to be the time $1$ map of $V_0$. Observe that is is well-defined as  as $\Lambda\times (1/2-1/6)-1/2=-1/10$.

The non-linearities appears in three vertical strips, namely, from right to left (see Fig. \ref{fig:nonlinear}):
 \begin{itemize}
   \item  $S_1:=\{\frac1{11}\leq x\leq\frac1{10}\}$: the slowing down;
   \item  $S_3:=\{\frac1{13}\leq x\leq\frac1{12}\}$: the transition from $X_0(x)=-\lambda(\frac12-x)$ to $-\kappa(x-\frac12)$.
   \item $S_5:=\{-\frac1{11}\leq x\leq -\frac{1}{12}\}$: the transition from $A_0(x)=-\kappa$ to $\lambda$.
 \end{itemize}
For convenience, we set: $S_0:=\{x\geq\frac1{10}\}$, $S_2:=\{\frac1{12}\leq x\leq\frac{1}{11}\}$, $S_4:=\{-\frac1{12}\leq x\leq\frac1{13}\}$ and $S_6:=\{x\leq-\frac1{11}\}$. We also define maps $h:]-1/2,1/2[\to]-1/2,1/2[$ and $h_x:]-1/2,-1/6[\to]-1/2,-1/10[$ according to $(h(x),h_x(y)):=(x(1),y(1))$ where $t\mapsto(x(t),y(t))$ is the orbit of $V_0$ starting at $(x,y)\in]-1/2,1/2[\times]-1/2,-1/6[$ at time $0$.

\subsubsection*{$V_1$ on $D\setminus Q_0$}
We extend $V_0$ to $D\setminus Q_0$ using the symmetries:
  \begin{equation}\label{defineV}
   V_1(x,y) = \left\{\begin{array}{ll}
              V_0(x,y) & \text{ if }y\leq \Lambda/3-1/2=-1/10\\
              D\tau_1(V_0(\tau_1^{-1}(x,y))) & \text{ if }x\leq \Lambda/3-1/2\\
              D\tau_2(V_0(\tau_2^{-1}(x,y))) & \text{ if }y\geq -\Lambda/3+1/2\\
              D\tau_3(V_0(\tau_3^{-1}(x,y))) & \text{ if }x\geq -\Lambda/3+1/2
              \end{array}\right.
 \end{equation}
where the $\tau_i$'s are the rotations defined before Proposition \ref{prop:factsH}. Observe that, in the above definition of $V_1$, there are four regions on which two formulas are given: for instance, $V_0(x,y)$ and $D\tau_1(V_0(\tau_1^{-1}(x,y)))$ on the square: $(x,y)\in[-2,\Lambda/3-1/2]\times[-\Lambda/3+1/2,2]$, but both are equal to $V_0(x,y)= \psi(\frac27(x^2+y^2)-\frac17)(-\lambda(1-x),-\kappa y)$. $V_1$ is properly defined as a $C^\infty$ vector field on $D\setminus Q_0$. We denote by $H_1:D\setminus Q\to D\setminus Q_0$ with $Q$,  the time $1$ flow of $V_1$ (it is the symmetric extension of $H_0$).

\begin{figure}
\includegraphics[width=8cm]{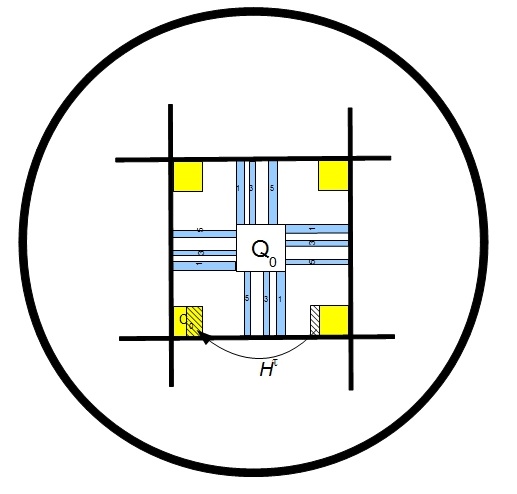}
\caption{The three strips $S_1,S_3,S_5$ of non linearity (marked $1,3,5$ and repeated according to the symmetry). The square $Q_0$, the four corners $\bigcup_{i=0}^3 \tau_i(C_0)$ and hatched, the domain and image of the transition map $H^\tau$ along the bottom side.}\label{fig:nonlinear}
\end{figure}

\begin{lem}\label{lem:flowtobound}
For $K\geq C$ and any $L\geq 1$, splitting $[-1/2,1/2]\times[-1/2,-1/6]$ into a right part and a left part:
 $$\begin{aligned}
        &A_L:=[-1/2,1/11]\times[-1/2,-1/6],
        \; A_R:=[1/11,1/2]\times[-1/2,-1/6],
 \end{aligned}$$
we get,
 \begin{align}
  \label{inclusion1}  (x,y)\in A_L &\implies H_1(x,y)\in [-\frac12,-\frac5{12}]\times[-\frac12,0] \\
  \label{inclusion2}  (x,y)\in A_R  &\implies 
            \alter{\text{either }H_1(x,y)\in[-\frac12,\frac12]\times[-\frac12,-\frac5{12}]  \\
                     \text{or }H_1(x,y)\in[-\frac1{12},\frac1{11}]\times[-\frac12,y[\\
                     \text{or }\alter{\frac1{11}\leq h(x) < x-C^{-1}/L \\ \text{ and } h_x(y)\leq -1/2+K^{-\frac{9}{10}L^{-1}} (y+1/2)}}
 \end{align}
\end{lem}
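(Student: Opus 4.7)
The plan is to integrate the ODE $(\dot x,\dot y)=V_1(x,y)$ explicitly on the bottom strip, exploiting that on this strip $V_1 = V_0$ is piecewise linear in $x$ and that the $y$-equation is separable once the trajectory $x(t)$ is known. Since $X_0 \leq 0$ here, $t\mapsto x(t)$ is non-increasing and traverses the pieces $S_0,\dots,S_6$ from right to left in order. The $y$-coordinate obeys
\begin{equation}\label{pr:ylog}
\log\frac{y(1)+1/2}{y(0)+1/2} \;=\; \int_0^1 \alpha_L(x(t),y(t))\,A_0(x(t))\,dt,
\end{equation}
with the heteroclinic line $y=-1/2$ invariant. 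Recall $A_0 = -\kappa$ on $\{x\geq-1/12\}$ and $A_0 = \lambda$ on $\{x\leq-1/11\}$, while $\alpha_L = \Theta(1)$ outside the slow belt $S_1 = [1/11,1/10]$ and $\Theta(1/L)$ inside it.

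For $(x,y) \in A_L$ the orbit remains strictly left of $S_1$, so $\alpha_L \equiv 1$ along it. If $x(0) \leq 1/13$, the strong-contraction ODE $\dot x = -\kappa(x+1/2)$ holds throughout and $x(1)+1/2 = (x(0)+1/2)/K \leq 1/12$ for $K\geq C$. If $1/13 < x(0) \leq 1/11$, the orbit crosses this interval in bounded time $t_* = \mathcal O(1)$ (rate $\geq\lambda(1/2-1/11)$), leaving $1-t_*$ units for strong contraction, which again delivers $x(1) \leq -5/12$. From \eqref{pr:ylog} with $\alpha_L A_0 \leq \lambda$ one gets $y(1)+1/2 \leq \Lambda(y(0)+1/2) \leq \Lambda/3 < 1/2$, and the invariance $y(t)\geq -1/2$ then completes \eqref{inclusion1}.

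For $(x,y) \in A_R$ I split according to the position of $x(1)$. If $x(1) \in [-1/12, 1/11]$, then $x(t) \geq -1/12$ throughout $[0,1]$, so the integrand of \eqref{pr:ylog} equals $-\alpha_L\kappa<0$ and $y(1)<y(0)$, giving the second alternative. If $x(1)<-1/12$, accounting for times spent in $\{x\geq -1/12\}$ (where $A_0 = -\kappa$) versus $\{x\leq -1/11\}$ (where $A_0 = \lambda$) shows that for $K \geq C(L)$ the net integrand in \eqref{pr:ylog} is at most $-\log 4$, whence $y(1)+1/2 \leq (y(0)+1/2)/4 \leq 1/12$, i.e., the first alternative. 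Finally if $x(1) \geq 1/11$ the orbit never leaves $S_0\cup S_1$; writing $t_0$ and $t_1=1-t_0$ for the times spent in $S_0$ and $S_1$, \eqref{pr:ylog} gives $y(1)+1/2 = (y(0)+1/2)\,e^{-\kappa(t_0+t_1/L)}$. Either this is $\leq 1/12$ (first alternative), or the exponent is so small that $t_1 \geq 9/10$ -- in which case $\dot x \leq -C^{-1}\lambda/L$ on $S_1$ (using $1/2-x \geq 2/5$ there) yields $x(0)-x(1) \geq C^{-1}/L$, while $t_1/L \geq 9/(10L)$ gives the $y$-bound $y(1)+1/2 \leq K^{-9/(10L)}(y(0)+1/2)$: the third alternative.

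The main technical point is the dichotomy in the last sub-case: either the $y$-contraction is strong enough to reach $1/12$ (first alternative) or the fraction of $[0,1]$ spent in $S_1$ is at least $9/10$ (third alternative). This requires a careful accounting of the orbit's time budget relating the two parameters $K$ and $L$, but once one uses that the fast-entry phase in $S_0$ approaches $x=1/10$ at the $K,L$-independent rate $\lambda$, the piecewise-linear structure of $X_0, A_0, \alpha_L$ reduces each estimate to elementary exponential integration. The symmetric extension \eqref{defineV} is not involved since $A_L \cup A_R$ lies entirely in the domain where $V_1 = V_0$.
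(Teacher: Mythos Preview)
Your approach is the same as the paper's: explicit integration of the separated ODE on the bottom strip, using the piecewise structure of $X_0$, $A_0$ and $\alpha_L$. You organize the $A_R$ analysis by the terminal position $x(1)$, whereas the paper splits according to the time budget $\tau_0+\tau_2+\tau_3+\tau_4$ versus $1/10$; these are equivalent bookkeepings and lead to the same three alternatives. Your $A_L$ argument and your cases $x(1)\in[-1/12,1/11]$ and $x(1)\geq 1/11$ are correct as written.

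There is one genuine gap. In your case $x(1) < -1/12$ you conclude the first alternative ``for $K \geq C(L)$'', but the lemma asserts the result for $K\geq C$ \emph{independent of $L$}. This uniformity matters: in the paper $L$ is chosen after $K$ (one takes $L\geq C(K)$), so an $L$-dependent threshold on $K$ would be circular. Your phrase ``accounting for times spent in $\{x\geq -1/12\}$'' is too coarse here, because a large share of that time may lie in the slow belt $S_1$ where $\alpha_L\sim 1/L$, and the naive lower bound on $\int_0^1 \alpha_L(-\kappa)\,dt$ from that region is only of order $-\kappa/L$. The missing observation --- which is exactly the content of the paper's Case~4 --- is that if the orbit reaches $x(1)<-1/12$ then it has \emph{fully} crossed $S_2=[1/12,1/11]$, where $\alpha_L\equiv 1$ and $X_0=-\lambda(1/2-x)$; that crossing alone costs time
\[
\tau_2=\frac{1}{\lambda}\log\frac{1/2-1/12}{1/2-1/11}=\frac{1}{\lambda}\log\frac{55}{54}>0.10,
\]
and contributes $-\kappa\,\tau_2<-0.10\,\kappa$ to \eqref{pr:ylog} regardless of $L$. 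Since the positive part of the integrand is at most $\lambda$, the net is $\leq -0.10\,\kappa+\lambda<-\log 4$ once $K\geq C$ is a universal constant, and the first alternative follows. With this single sentence inserted, your sketch matches the paper's proof.
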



\begin{proof}[Proof of Lemma \ref{lem:flowtobound}]
We use the maps $h,h_x$ from above. We define $\tau_0:=0$ and, for $i=1,\dots,6$, $\tau_i:=\sup\{t\geq0:x(t+\tau_{i-1})\in S_i\}\cup\{0\}$, the time spent in $S_i$.  Observe that $X_0(x,y)<0$ for $-1/2<x<1/2$.

The first inclusion \eqref{inclusion1} will be a consequence of the claim:
 \begin{equation}\label{eq:claim1}
    h\left(\frac1{11}\right)<-\frac5{12}.
 \end{equation}
(Note that $h_x(-1/6)\leq -1/10<0$.)

Observe that for $x(0)=\frac1{11}$, $\tau_0=\tau_1=0$,  
 $$
   \tau_2=\frac{1}{\lambda} \log\frac{5/12}{9/22}=\frac{1}{\lambda} \log\frac{55}{54}<0.11 \text{ and }    
   \tau_3\leq\frac{1/156}{(5/12)\lambda}<0.09
 $$
(as $|X_0|\geq\lambda(1/2-1/12)=(5/12)\lambda$ on $S_3$). Hence 
 $$
    h(1/11) \leq -1/2+(1/13+1/2)K^{-(\tau_4+\tau_5+\tau_6)}
 $$
and $\tau_4+\tau_5+\tau_6\geq 1-0.20$ so $h(\frac1{11})\leq -\frac12+\frac{7}{13} K^{-0.80}$ from which the claim follows  (for $K$ large enough).

The second inclusion \eqref{inclusion2} will be a consequence of the following claim (for $K$ large enough). For any $(x,y)\in A_R$, one of the following holds:
 \begin{equation}\label{eq:claim2}\begin{aligned}
   &(a)\quad h_x(y)\leq -5/12\\
   &(b)\quad  h_x(y)\leq -1/2+(y+1/2)K^{-\frac{9}{10}L^{-1}}<y\text{ and }\frac{1}{11}\leq h(x)<x-\frac{C^{-1}}L\\ 
   &(c)\quad  h_x(y)<y\leq -1/6 \text{ and } h(x)\in[-1/12,1/11].
 \end{aligned}\end{equation}
Observe:
  $$
   h_x(y)\leq -1/2+(y+1/2)K^{-\tau_0-\tau_1/L-\tau_2-\tau_3-\tau_4}\Lambda^{\tau_5+\tau_6}
  $$
and consider the following cases.

\medbreak\noindent{\sl Case 1:} $\tau_0+\tau_2+\tau_3+\tau_4\geq1/10$. Thus, $\tau_5+\tau_6\leq9/10$ and $h_x(y)\leq-1/2+(1/3)K^{-1/10}\Lambda^{9/10}<-5/12$. (a) is satisfied.

\medbreak
In the remaining cases, we assume $\tau_0+\tau_2+\tau_3+\tau_4<1/10$.
 
\medbreak\noindent{\sl Case 2:} $\tau_0+\tau_1=1$. Thus, $h_x(y)\leq -1/2+(y+1/2)K^{-\frac{9}{10}L^{-1}}<y\leq-1/6$ and $h(x)<x-C^{-1}/L$. (b) is satisfied.

\medbreak\noindent{\sl Case 3:} $\tau_2>0,\tau_5+\tau_6=0$. Thus,  $h(x)\in[-1/12,1/11]$ as $\tau_2>0$ and $\tau_5=0$. As in the previous case, $h_x(y)<y\leq-1/6$. (c) is satisfied

\medbreak\noindent{\sl Case 4:}  $\tau_5+\tau_6>0$, As $x(0)=1/11$, $\tau_2=\frac{1}{\lambda} \log\frac{55}{54}>0.10$ (see above) and therefore:
 $$
    h_x(y)\leq  -1/2+(y+1/2) K^{-0.10}\Lambda^{0.90}<-5/12,
 $$
and (a) is satisfied.
\end{proof}

\subsection{Smooth map $H$}\label{sec:proof-H}

We extend $V_1$ to a $C^\infty$ smooth field $V$ over $D$ such that $V|Q_1=V_1|Q_1$ and let $H:D\to D$ be the time $1$ map of $V$. If necessary, we multiply the vector field by a smooth function less than one over a neighborhood of $Q\cup(D\setminus[-7/12,7/12]^2)$ so that the Lipschitz constant of $H$ over that set is less than $\Lambda$.

\medbreak

\begin{proof}[Proof of Proposition \ref{prop:factsH}]
The first two items are clear from the construction, except for the bound on $\lambda(H)$ which will be proved at the end.

Item \eqref{item:cycle} follows from applying the Poincar\'e-Bendixson theorem to the flow defined by $V$.

The first claim of item \eqref{item:filter} follows from Lemma \ref{lem:flowtobound} for $V$, its invariance under $\tau_i$, $i=0,1,2,3$ and an obvious induction. The statement about the omega limit set follows from the fact that any $V$-orbit must visit the corners infinitely many times in their cyclic order and the estimate \eqref{item:transit2} proved below.

The bound on the transition time $\tau$ in item \eqref{item:transit} follows from the fact that $-X_0(x)\geq c'/L$ for $-5/12\leq x\leq 1/2-(1/12)\Lambda^{-1}=31/72$  and $ -X_0(x,y)\leq c''/L$ for $41/440\leq x\leq 43/440$, for  some constants $c',c''>0$.

We prove  item \eqref{item:transit1} by a computation.  We first  assume that $L=1$ and use the notation $h,h_x$ from section \ref{sec:V0}. $\tilde h(x_2)=h^\tau(x_2)$ with $\tau=\tau(x_1,x_2)=\tau(x_2)$. Using \eqref{eq:V0}, $$h_{x_2}(x_1)=\exp\left(\int_0^\tau A_0(x(s))\, ds\right) (x_1+1/2)-1/2,$$ where $t\mapsto x(t)$ is the solution of $x'(t)=X_0(x(t))$ starting at $x(0)=x_2$. The first claim of item \eqref{item:transit1} follows with $\tilde\alpha(x_2):=\exp\left(\int_0^\tau A_0(x(s))\, ds\right)$.

We deduce the second claim of  item \eqref{item:transit1}. The lower, right entry of the matrix is: $$(h^\tau)'(x_2)=X_0(h^\tau(x_2))/X_0(x_2)=C^{\pm1}\kappa.$$
We claim that the top, left entry is:
  $$\begin{aligned}
        \tilde\alpha(x_2)&=\exp\left(\int_0^\tau A_0(x(s))\, ds\right) =K^{-\Omega(1)}.
  \end{aligned}$$
Indeed, $A_0(x)=-\log K$ for $x\in[1/12,2/5]$ and this occurs for a constant time. Now,  everywhere $A_0(x)\leq \log \Lambda$ and the total time $\tau$ is bounded (assuming, as we may in this estimate, that $L=1$): $\tilde\alpha(x_2)\leq K^{-\Omega(1)}$. $A_0(x)\geq-\log K$ everywhere similarly implies $\tilde\alpha(x_2)\geq K^{-\Omega(1)}$. 

The top, right entry is:
 $$\begin{aligned}
      \tilde\alpha'(x_2)(x_1+1/2) &= \tilde\alpha(x_2) \exp\left(\int_0^\tau A_0'(x(s)) \frac{\partial x(s)}{\partial x(0)}\, ds\right)(x_1+1/2) \\
        &\leq C (x_1+1/2) K^{-\Omega(1)}
 \end{aligned}$$
where $\frac{\partial x(s)}{\partial x(0)}$ is the derivative of $x(s)$ with respect to the initial condition at time $0$. The last inequality uses that $A_0'(x)=132\psi'(123x+12)(\lambda+\kappa)\leq0$ and $\partial x(s)/\partial x(0)>0$: the above exponential is less than $1$.

The remaining entry is clearly zero.

\medbreak

We now extend the previous estimate to arbitrary values of $L\geq1$. We express the transition map  in terms of the passage map $P:\tau_i(C)\to\tau_{i+1}(\{x_*\}\times[0,1])$, $(x,y)\mapsto\Phi^{\tau_L^*(x,y)}(x,y)$ for the flow for parameter $L$. Notice that this passage map does not depend on $L$, though $\tau_L^*$ does (changing $L$ only changes the speed at which the trajectories are described). We have: $H^{\tau(x,y)}(x,y)=\Phi^{\delta(x,y)}(P(x,y))$ where $\delta_L(x,y):=\tau_L(x,y)-\tau^*_L(x,y)\in[0,1]$. Observe that $\tau_L(x,y)$ and $\tau^*_L(x,y)$ depend only on $x$ because $V(x,y)=V(x,z)$ for all $(x,y),(x,z)\in[-1/2,1/2]\times[-1/2,-1/6]$. Using flow boxes, it is easily seen that $\delta_L(x,y)=x+const$. The claimed uniformity follows from the boundedness of $\delta_L$: item \eqref{item:transit1} is proved.

Finally,  item \eqref{item:transit2} is a consequence of the following easy estimates.
 \begin{itemize}
  \item $x(0)\geq 4/10$;
  \item $x(t_1)=1/12=1/2-\Lambda^{t_1}1/10$ so $t_1=\log(25/6)/\lambda$;
  \item $x(t_2)=-1/12\geq -1/2+K^{-(t_2-t_1)}7/12$, so $t_2-t_1\geq \log(7/5)/\kappa$;
  \item $x(t_3)=-5/12= -1/2+K^{-(t_3-t_4)}5/12$, so $t_3-t_2=\log 5/\kappa$.
\end{itemize}
Indeed, these imply, writing $\bar y(t)$ for $y(t)+1/2$,
 \begin{itemize}
  \item $\bar y(t_1)=K^{-t_1}\bar y(0)=(25/6)^{-\eta}\bar y(0)$;
  \item $\bar y(t_2)=K^{-(t_2-t_1)}\bar y(t_1)\leq (5/7)\bar y(t_1)$;
  \item $\bar y(t_3)\leq \Lambda^{t_3-t_2}\bar y(t_2) \leq 5^{1/\eta}\bar y(t_2)$.
 \end{itemize}
Finally, $\bar y(\tau)\leq\Lambda \bar y(t_3)$ and
 $$
     \bar y(t_3)\leq  5^{1/\eta}(25/6)^{-\eta}(5/7)\bar y(t_0)\leq \exp(1.61/\eta-1.42\eta-0.33)\bar y(t_0),
 $$
(recall that $K$ and therefore $\eta$ is large). This implies item \eqref{item:transit2} of the Proposition.

\medbreak

It remains to see that $\lambda(H)=\log\Lambda$. We have arranged that $\Lip(H|Q\cup(D\setminus[-7/12,7/12]^2))<\Lambda$.  Clearly, $\lambda(H|\partial[-1/2,1/2]^2)=\log\Lambda$ as the time spent outside a neighborhood of the corners is uniformly bounded. This extends to $[-7/12,7/12]^2\setminus]-1/2,1/2[^2$. For the remaining part, observe that by item \eqref{item:filter}, there are constant $C,k_0$ such that:
 $$
   \Lip(H^k|[-1/2,1/2]^2\setminus Q)\leq C \Lip(H^{k-k_0}|[-1/2,1/2]^2\setminus[-5/12,5/12]^2).
 $$
As all points in this latter subset converge to $\partial [-1/2,1/2]^2$, the corresponding Lipschitz constant is bounded by $\Lambda^k$. Thus $\lambda(H)\leq\log\Lambda$.

\medbreak

Proposition \ref{prop:factsH} is proved.
\end{proof}

\bibliographystyle{plain}
\bibliography{buzzimath}

\end{document}